%
\documentclass[12pt]{article}

\usepackage[tbtags]{amsmath}
\usepackage{amsfonts,amssymb,amsthm}
\newtheorem{theorem}{Theorem}
\newtheorem{prop}{Proposition}[section]
\newtheorem{lemma}[prop]{Lemma}
\newtheorem*{corollary}{Corollary}
\numberwithin{equation}{section}

\theoremstyle{definition}
\newtheorem{remark}[prop]{Remark}

\def\ba#1{\begin{array}{#1}}
\def\ea{\end{array}}

\newcommand{\dst}{\displaystyle}
\newcommand{\RR}{\mathbb{R}}
\newcommand{\NN}{\mathbb{N}}
\newcommand{\QQ}{\mathbb{Q}}
\newcommand{\ZZ}{\mathbb{Z}}
\newcommand{\ee}{\mathbf{e}}
\newcommand{\xx}{\mathbf{x}}
\newcommand{\rr}{\mathbf{r}}
\newcommand{\yy}{\mathbf{y}}

\newcommand{\eps}{\varepsilon}

\newcommand{\tT}{\tilde{T}}
\newcommand{\mend}{\mathrm{\kappa}}
\newcommand{\plt}{\ll} 

\newcommand{\intv}[2]{[{#1}:{#2}]}         
\newcommand{\ai}[1]{a_{#1}}         
\newcommand{\av}[1]{A(#1)}         
\newcommand{\mf}[1]{m^+_{#1}}  
\newcommand{\mcs}{S_{n,*}}          
\newcommand{\infS}[1]{A_{#1,*}}   
\newcommand{\cns}[1]{[\mathbf{#1}]}  

\newcommand{\minT}[1]{T_{#1,*}}   
\newcommand{\infT}{T_{*}}      
\newcommand{\mintT}[1]{\tT_{#1,*}}   
\newcommand{\inftT}{\tT_{*}}      

  
\usepackage{pgf,tikz,pgfplots}
\pgfplotsset{compat=1.15}
\usepackage{mathrsfs}
\usetikzlibrary{arrows}

\begin{document}

\author{Sergey Sadov\footnotemark[1]\footnote{E-mail: serge.sadov@gmail.com}}
\date{}

\title{Lower bound for cyclic sums with one-sided maximal averages in denominators}


\maketitle

\begin{abstract}
Let $\xx=(x_1,\dots,x_n)$ be an $n$-tuple of positive real numbers
and the sequence $(x_i)_{i\in\ZZ}$ be its $n$-periodic extension. 
Given an $n$-tuple $\rr=(r_1,\dots,r_n)$ of positive integers, let
$a_i$ be the arithmetic mean of $x_{i+1},\dots,x_{i+r_i}$.
We form the cyclic sums
$S_n(\xx,\rr)=\sum_{i=1}^n x_i/a_{i}$,
following the pattern of the long studied Shapiro sums, which correspond to all $r_i=2$,
and more general Diananda sums, where all $r_i$ are equal.
We find the asymptotics of the $\rr$-independent lower bounds $\infS{n}=\inf_{\rr}\inf_{\xx} S_n(\xx,\rr)$ as
$n\to\infty$: it is
$\infS{n}=e\log n - A+O(1/\log n)$.

\medskip
{\em Keywords}: cyclic sums,  Shapiro's problem, Diananda sums, maximal function, uncycling.

\medskip
MSC: 
26D15, 
26D20  
\end{abstract}

\section{Notation, background, motivation} 
\label{sec:1problem}

Let $\xx=(x_1,\dots,x_n)$ be an $n$-tuple ($n\geq 1$) of nonnegative real numbers. 
The $n$-periodic extension of $\xx$ is the doubly-infinite sequence $(x_n)$ defined by
$x_{i+kn}=x_i$ ($k\in\ZZ$, $i=1,\dots,n$). 

An integer interval $I=[a:b]$ ($a,b\in\ZZ$, $b\geq a$) is the finite set $\{a,a+1,\dots,b-1,b\}$
of cardinality
$$
|I|=b-a+1.
$$
In particular, $[a:a]=\{a\}$ is an interval of cardinality one. 

Given an $n$-tuple $\xx$ and an interval $I=[a:b]$, consider the {\em interval average}\  
$$
 \ai{I}(\xx)=\frac{1}{|I|}  {\sum_{j\in I} x_j}=\frac{1}{b-a+1}\sum_{j=a}^b x_j.
$$
The $x_j$ in the right-hand side are members of the periodic extension of $\xx$. 

Let $\rr=(r_1,\dots,r_n)$ be an arbitrary $n$-tuple of positive integers. Introduce sums of the form
\begin{equation}
\label{varcsum}
S_n(\xx,\rr)=\sum_{i=1}^n\frac{x_i}{\ai{[i+1:\,i+r_i]}(\xx)}=\sum_{i=1}^n \left(\frac{x_i r_i}{\sum_{j=1}^{r_i} x_{i+j}}\right).
\end{equation}
We assume throughout that the denominators are nonzero. In this case we say that the pair  $(\xx,\rr)$ of $n$-tuples is {admissible}; if one of them  ($\xx$ or $\rr$) is fixed, we call the other $n$-tuple admissible.

The research question here is to determine the uniform in $(\xx,\rr)$ lower bounds for the sums \eqref{varcsum}, particularly ---their  large $n$ asymptotics. 
The question has its roots in the long known Shapiro problem. Let us sketch out the connection.

For $n$-tuples $\rr$ with equal components we use notation 
$$
\rr=(k,k,\dots,k)=\cns{k}.
$$
(The parameter $n$ is implicit; more pedantic notation would be $\cns{k}_n$.)

In the simplest case $\rr=\cns{1}$ we have
$$
 S_n(\xx,\cns{1})=\sum_{i=1}^n\frac{x_i}{x_{i+1}}
$$
and
$$
 \inf_{\xx} S_n(\xx,\cns{1})=n
$$
by the inequality between the arithmetic and geometric means (AM-GM).
Hereinafter the infimum is taken over all admissible $n$-tuples.

The case $\rr=\cns{2}$ involves  {\it Shapiro's sums}\  
$$
\frac{1}{2} S_n(\xx,\cns{2})=\sum_{i=1}^n\frac{x_i}{x_{i+1}+x_{i+2}}
$$
named after Harold S.~Shapiro who proposed \cite{Shapiro_1954} to prove that $\inf_{\xx} S_n(\xx,\cns{2})=n$.
It has been known since the late 1950s that Shapiro's conjecture is invalid  for sufficiently large $n$.
Drinfeld \cite{Drinfeld_1971} analytically determined the constant
$$
B_2 =\inf_{n\geq 1}\frac{1}{n}\inf_{\xx} S_n(\xx,\cns{2})=0.989133\dots.
$$
A review and history of results related to Shapiro's problem can be found in \cite[Ch.~16]{MPF_1993} or  \cite{Clausing_1992}.


The sums
\footnote{Different normalizations have been used. Diananda \cite{Diananda_1962a} designates as $B(x_1,\dots,x_n)$ ($k$ implicit) what we would write as $(kn)^{-1} S_{n,k}(\xx)$. In \cite{Sadov_2015} the sums 
\eqref{diansum} are denoted $S_{n,k}(\xx)$.} 
\begin{equation}
\label{diansum}
\frac{1}{k}S_n(\xx,\cns{k})=\sum_{i=1}^n\frac{x_i}{x_{i+1}+\dots+x_{i+k}}
\end{equation}
with arbitrary integer $k\geq 1$ were first considered by Diananda \cite{Diananda_1959}.
 In \cite{Diananda_1962a} Diananda showed 
that 
$$
\sum_{i=1}^n\frac{x_i}{\max(x_{i+1},\dots,x_{i+k})}\geq \frac{1}{k}\left\lfloor\frac{n+k-1}{k}\right\rfloor \geq \frac{n}{k^2}.
$$
Put 
\footnote{The present notation $A_{n,k}$, resp., $B_k$, corresponds to $kA(n,k)$ resp., $B(k)$, in \cite{Sadov_2015}.}
\begin{equation}
\label{Ank}
A_{n,k}=\inf_{\xx} S_n(\xx,\cns{k}).
\end{equation}
It follows 
that the constants
$$
B_k=\inf_{n\geq 1}\frac{A_{n,k}}{n}
$$
are strictly positive, namely, 
$$
 B_k\geq \frac{1}{k}.
$$
Diananda's estimate does not imply that they are uniformly bounded away from zero. That this is the case was 
proved by the author \cite{Sadov_2015}: the sequence $(B_k)$ is nonincreasing and its monotone limit  
$$
B_\infty=\downarrow\lim_{k\to\infty} B_k
$$
satisfies the estimates
$$
\log 2\leq B_\infty\leq 0.930498\dots.
$$
Exact values of $B_\infty$ and of $B_k$, $k\geq 3$, are as yet not known. 


Thus the cyclic sums with summands $x_i/\ai{[i+1:\,i+k]}(\xx)$, $i=1,\dots,n$, admit the lower bounds
\begin{equation}
\label{uniformconstk}
S_n(\xx,\cns{k})\geq B_\infty n
\end{equation}
uniformly in $k$. The (unknown) constant $B_\infty$ is best possible by definition. 
Let
$$
A_n=\inf_{k\geq 1}A_{n,k}.
$$
From \eqref{uniformconstk} on the one hand and from the special case $\xx=(1,\dots,1)$ on the other hand we infer the inequality
\begin{equation}
\label{An}
B_\infty n\leq A_n\leq n.
\end{equation}
It is a simple exercise to check that $A_n=n$ for $n=1,2,3$. Known results about Shapiro's problem tell us that $A_{n,2}<n$
when $n\geq 24$ or $n=2m$, $m\geq 7$. 
Therefore $A_n<n$ for such $n$.
The author does not know whether $n=14$ is the smallest for which $A_n<n$. (It is the smallest for which $A_{n,2}<n$).


\smallskip
Since, by \eqref{An}, the ratio $\inf_{k}\inf_{\xx} S_n(\xx,\cns{k})/n$
lies between two positive constants, it is natural to ask whether the greatest lower bound remains asymptotically linear in $n$ if the length of the interval of averaging in the denominators of Diananda sums is allowed to vary from summand to summand. 
This prompts us to consider sums of the form \eqref{varcsum}. 

Generalizing \eqref{Ank}, for $n\geq 1$ and a positive integer $n$-tuple $\rr$ we put
\begin{equation}
\label{Anrr}
A_{n,\rr}=\inf_{\xx} S_n(\xx,\rr)
\end{equation}
(where $\rr$ is fixed). Minimizing further with respect to $\rr$, define
\begin{equation}
\label{Anstar}
  A_{n,*}=\inf_{\rr} A_{n,\rr}, 
\end{equation}
where $\rr$ ranges over positive integer $n$-tuples. 
In particular, $\rr$ may equal to any constant $n$-tuple $\cns{k}$. So
\begin{equation}
\label{AnstarAn}
 A_{n,*}\leq A_n.
\end{equation}

Here comes 

\bigskip\noindent
{\bf Main question. }
Determine the asymptotic behavior of $A_{n,*}$ as $n\to\infty$.

\bigskip
The answer is given in Theorem~\ref{thm:main} below.

We conclude this section with an asymptotically precise version of the estimate \eqref{An}. 

\begin{prop}
\label{prop:limAn}
There holds the identity
$$
\lim_{n\to\infty}\frac{A_n}{n}=B_\infty
$$
or, equivalently,
$$
\lim_{n\to\infty}\frac{1}{n}\inf_{k} A_{n,k} = \lim_{k\to\infty} \inf_{n}\frac{A_{n,k}}{n},
$$
where $A_{n,k}$ are defined in Eq.~\eqref{Ank}.
\end{prop}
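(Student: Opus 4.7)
The plan is to sandwich $A_n/n$ between $B_\infty$ and $B_\infty + o(1)$ as $n\to\infty$; the lower bound $\liminf_n A_n/n \geq B_\infty$ is immediate from inequality \eqref{An}, which already records $A_n \geq B_\infty n$. The entire work lies in the matching upper bound $\limsup_n A_n/n \leq B_\infty$. The key reduction is that $A_n \leq A_{n,k}$ for every $k\geq 1$, so it suffices to establish the one-variable statement
$$
\limsup_{n\to\infty}\frac{A_{n,k}}{n} \leq B_k \qquad \text{for each fixed } k\geq 1;
$$
combined with the known monotone convergence $B_k\downarrow B_\infty$, this delivers $\limsup_n A_n/n \leq B_k$ for every $k$, hence $\leq B_\infty$.

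To produce near-optimal $n$-tuples, fix $k$ and $\epsilon>0$, and by definition of $B_k=\inf_{m\geq 1} A_{m,k}/m$ choose $m\geq k$ (enlarging by an integer multiple if needed, since tiling preserves the ratio $A_{m,k}/m$) and an admissible $m$-tuple $\xx^*$ with $S_m(\xx^*,\cns{k})/m < B_k + \epsilon$. For large $n$, write $n=qm+s$ with $0\leq s<m$ and set
$$
\xx=(\,\underbrace{\xx^*,\xx^*,\dots,\xx^*}_{q\text{ blocks}},\,\underbrace{c,c,\dots,c}_{s\text{ copies}}\,),\qquad c=\min_{1\leq j\leq m} x_j^*.
$$
The crucial point is that for every $i$ with $1\leq i\leq qm-k$ the window $[i+1:i+k]$ lies entirely inside $[1:qm]$, where $\xx$ coincides with the $m$-periodic extension of $\xx^*$; consequently the corresponding summand of $S_n(\xx,\cns{k})$ equals the $i$-th summand of $S_m(\xx^*,\cns{k})$, and these terms sum to at most $q\,S_m(\xx^*,\cns{k})$. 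The remaining at most $s+k$ ``boundary'' positions each produce a summand bounded by $C:=\max_j x_j^*/\min_j x_j^*$, since every entry of $\xx$ lies in $[\min_j x_j^*,\max_j x_j^*]$. Altogether
$$
S_n(\xx,\cns{k})\leq q\,S_m(\xx^*,\cns{k})+C\,(s+k);
$$
dividing by $n\geq qm$ and letting $n\to\infty$ with $k,m,\xx^*$ fixed yields $\limsup_n A_{n,k}/n \leq S_m(\xx^*,\cns{k})/m < B_k+\epsilon$, and $\epsilon\to 0$ closes the step.

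The main delicate point is the uniform boundedness of the boundary summands when $s<k$: in that regime the window of a padding position wraps all the way through the padding back into the first $\xx^*$-block, and the window of a position near the end of the last $\xx^*$-block may reach past the padding as well. In each such scenario, however, the denominator is a nonnegative combination of entries of $\xx$, every one of which lies in $[\min_j x_j^*,\max_j x_j^*]$, so the denominator is bounded below by $\min_j x_j^*$ and the summand stays at most $C$. Once the limit $\lim_n A_n/n=B_\infty$ is established, the equivalent reformulation stated in the proposition is a direct rewriting via $A_n/n=\inf_k A_{n,k}/n$ and $B_\infty=\lim_k \inf_n A_{n,k}/n$.
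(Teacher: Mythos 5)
Your proof is correct, but it follows a genuinely different route from the paper's. The paper's argument is a short contradiction: it invokes \cite[Theorem~4]{Sadov_2015}, which already asserts $\lim_{n\to\infty} A_{n,k}/n = B_k$ for each fixed $k$, and combines this with the monotone convergence $B_k\downarrow B_\infty$ to rule out $\limsup_n A_n/n > B_\infty$. You instead re-derive (the nontrivial $\limsup\leq$ half of) that cited limit by hand, through the tiling/padding construction, and then pass to $B_\infty$ directly without a contradiction scaffold. What this buys you is self-containedness: your argument does not lean on the earlier theorem, which is also proved via a block-concatenation argument in the cited paper—so in effect you have correctly reconstructed a key step of it. What it costs is length: the paper's proof is shorter precisely because it delegates the tiling work to an existing result. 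One small point to tidy: you should require $\xx^*$ strictly positive (available by density, since the infimum $A_{m,k}$ is taken over admissible tuples and can be approximated by positive ones), so that $c=\min_j x_j^*>0$ and the padded denominators stay bounded away from zero; otherwise the bound $C=\max_j x_j^*/\min_j x_j^*$ is not defined. The requirement $m\geq k$ is actually unnecessary—the window-matching argument works for any $m$ since $\xx$ restricted to $[1:qm]$ is $m$-periodic—but it is harmless.
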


\begin{proof}
Denote $B'=\limsup_{n\to\infty} n^{-1} A_n$. 
By \eqref{An}, $B'\geq B_\infty$. Suppose, by way of contradiction, that $B'>B_\infty$. Since $B_k\downarrow B_\infty$, there exists $k_0$ such that $B'>B_{k_0}$. 
Take some $B''$ such that $B_{k_0}<B''<B'$.
There is a sequence $n_j\to\infty$ such that $A_{n_j}\geq B'' n_j$ for all $j$. Consequently,
$$
 \frac{A_{n_j,k}}{n_j}\geq B''>B_{k_0}
$$
for any $j$ and $k$. 

But according to \cite[Theorem~4]{Sadov_2015}, $\lim_{n\to\infty} A_{n,k}/n=B_k$ for all $k$. 
Taking $k=k_0$ and passing to the limit as $j\to\infty$ in the above formula, we obtain a contradiction.
\end{proof}

\section{Sums with maximal forward averages in denominators}
\label{sec:summfa}

Before formulating the main result, we will give an alternative, more convenient expression for $A_{n,*}$. 

In \eqref{Anrr}--\eqref{Anstar} we have defined $A_{n,*}$ as an ``iterated infinum'' (mimicking familiar terminology of integral calculus)
$A_{n,*}=\inf_{\rr}\inf_{\xx} S_n(\xx,\rr)$. 
Obviously, it can be equivalently written as a ``double infimum'' 
$$
 A_{n,*}=\inf_{(\xx,\rr)}S_{n}(\xx,\rr)
$$
over all admissible pairs of $n$-tuples. It better suits our purpose to define $A_{n,*}$ as an iterated infimum in the other order.  

Introduce the function
$$
  \mcs(\xx)=\inf_{\rr} S_n(\xx,\rr),
$$
where $\rr$ ranges over all admissible integer $n$-tuples
(for fixed $\xx$). 
Then
\begin{equation}
\label{infS}
\infS{n}=\inf_{\xx}\inf_{\rr} S_{n}(\xx,\rr)=\inf_{\xx}\mcs(\xx).
\end{equation}

We will write the function $\mcs(\xx)$ as a cyclic sum. Introduce the {\em maximal forward averages} 
\begin{equation}
\label{mfavg}
 \mf{i}(\xx)=\sup_{r\geq 1} \ai{[i+1:\,i+r]}(\xx).
\end{equation}
As a matter of fact, `$\sup$' can be replaced by `$\max$'
and the maximization range can be restricted to $1\leq r\leq n$, see Proposition~\ref{prop:len-max-int_le-n} below.

 It is easy to see that
\begin{equation}
\label{maxcsum2}
 \mcs(\xx)=\sum_{i=1}^n \frac{x_i}{\mf{i}(\xx)}.
\end{equation}
This last form emphasizes the analogy between $\mcs(\xx)$ and the classical Shapiro sums or Diananda sums \eqref{diansum}. 

The {\em right maximal function} $\xx\mapsto M^r\xx$ of Hardy and Littlewood (cf.\ e.g.\ \cite[Theorem 3.4]{Phillips_1967}) for the periodically extended sequence $(x_i)$ is
\footnote{The superscript `$r$' in $M^r$ stands for `right'} 
\begin{equation}
\label{MaxFr}
 M^r\xx(i)=\max_{r\geq 1} \ai{[i:\,i+r-1]}(\xx),
\end{equation}
One recognizes the relation
$\mf{i}(\xx)=M^r \xx(i+1)$. 

\begin{prop}
\label{prop:len-max-int_le-n}
In \eqref{mfavg} and in \eqref{MaxFr} it suffices to maximize over $r\leq n$. 
\end{prop}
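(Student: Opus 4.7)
The plan is to show that any forward average over a window of length $r>n$ is dominated by a forward average over a window of length at most $n$. This immediately implies the result for both $\mf{i}(\xx)$ and $M^r\xx(i)$, since the two definitions differ only by a shift of the starting index.

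Fix $i$ and write $A_r := \ai{[i+1:i+r]}(\xx)$ and $S_r := \sum_{j=i+1}^{i+r} x_j$, so that $A_r = S_r/r$. Let $T := \sum_{j=1}^n x_j$; by $n$-periodicity of the extended sequence, $\sum_{j=i+1}^{i+n} x_j = T$ for every $i$, and more generally any window of length $n$ sums to $T$. Given $r>n$, apply Euclidean division: $r = qn + s$ with $q\geq 1$ and $0\leq s\leq n-1$. Splitting the window $[i+1:i+r]$ into $q$ consecutive blocks of length $n$ followed by a residual block of length $s$ gives
$$
 S_r = qT + \sum_{j=i+1}^{i+s} x_j.
$$

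If $s=0$, this gives $A_r = qT/(qn) = T/n = A_n$, so $A_r$ itself equals an average over a window of length $n$. If $1\leq s\leq n-1$, then $\sum_{j=i+1}^{i+s}x_j = s\,A_s$, and
$$
 A_r \;=\; \frac{qT + s\,A_s}{qn+s} \;=\; \frac{qn}{qn+s}\cdot\frac{T}{n} \;+\; \frac{s}{qn+s}\cdot A_s,
$$
which is a convex combination of $A_n = T/n$ and $A_s$. Hence $A_r \leq \max(A_n, A_s) \leq \max_{1\leq r'\leq n} A_{r'}$. In either case the supremum over $r\geq 1$ in \eqref{mfavg} and \eqref{MaxFr} is dominated by the maximum over the finite range $1\leq r\leq n$, and of course the latter is attained.

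I do not anticipate any real obstacle; the whole argument is the one-line observation that periodicity forces a long window to be a disjoint union of full periods plus a short remainder, so its average is a weighted mean of the global mean $T/n$ and a short-window average --- and a weighted mean never exceeds its two ingredients.
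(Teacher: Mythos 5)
Your proof is correct and takes essentially the same approach as the paper: write $r$ as a multiple of $n$ plus a short remainder, observe that periodicity makes the long-window average a convex combination of the global mean $T/n$ and a short-window average, and conclude it is dominated by one of the two. The only (harmless) difference is that you treat the $s=0$ case explicitly, whereas the paper's decomposition $r=n\ell+r'$ with $1\leq r'\leq n-1$ leaves that edge case implicit.
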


\begin{proof}
Suppose that 
$r>n$ and write $r=n\ell+r'$, where $1\leq r'\leq n-1$;
then by $n$-periodicity
$$
r\ai{[i:i+r-1]}(\xx)=r'\ai{[i:i+r'-1]}(\xx)
+(r-r')\ai{[1:n-1]}(\xx).
$$
It follows that $\ai{[i:i+r-1]}\leq \max\left(\ai{[i:i+r'-1]},
\ai{[i:i+n-1]}\right)$.
\end{proof}

From the double-sided estimate for $\mf{i}(\xx)$
$$
\max_j x_j\geq \mf{i}(\xx)\geq \ai{[i+1:\,i+n]}(\xx)=\ai{[1:n]}(\xx)
$$
it follows that for any $\xx$
$$
 n\geq \mcs(\xx)\geq 1 .
$$
The same bounds hold of course for $\infS{n}$. 
In view of \eqref{AnstarAn} and Proposition~\ref{prop:limAn}
the upper bound can be asymptotically improved:
$$
 \limsup_{n\to\infty} \frac{\infS{n}}{n}\leq B_\infty.
$$
On the other hand, it is not immediately clear why the sequence $\infS{n}$ should be unbounded. 

 \smallskip
Our main result asserts that $\infS{n}=\inf\mcs(\xx)$ does indeed grow with $n$, but the growth is logarithmic,
in sharp contrast with (\ref{An}).

\begin{theorem}
\label{thm:main}
There holds the asymptotic formula
$$
\infS{n}= e\log n - A+O\left(\frac{1}{\log n}\right)
$$
as $n\to\infty$. The numerical value of the constant $A$ is
$$
 A=1.70465603718\dots.
$$
\end{theorem}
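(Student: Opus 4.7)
The plan is to sandwich $\infS{n}$ between matching asymptotic upper and lower bounds of the form $e\log n - A \pm O(1/\log n)$, handling the two directions separately.

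\textbf{Upper bound (construction).} I would exhibit an explicit admissible $\xx^{(n)}$ with $\mcs(\xx^{(n)}) = e\log n - A + O(1/\log n)$. The occurrence of $e$ in the leading term echoes the sharp $L^p$ constants $(p/(p-1))^p$ of the one-sided Hardy--Littlewood maximal inequality, whose infimum $e$ is attained in the $L^\infty$ limit; this points toward a multi-scale, exponentially structured extremizer. A concrete candidate is a piecewise-geometric sequence: pick $K \asymp \log n$, block sizes $b_0,b_1,\dots,b_K$ with $\sum b_k = n$ growing geometrically, and set $x_i$ to the constant value $q^k$ on the $k$-th block, with $q$ tuned (near $1/e$) so that the maximal forward average at each interior position is dominated by a long window reaching the next block rather than by the immediately following entry. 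After computing $\mf{i}(\xx^{(n)})$ at both interior and boundary positions (Proposition~\ref{prop:len-max-int_le-n} limits the search to $r\le n$), each of the $K+1$ blocks contributes roughly a unit of $e$ to the cyclic sum, giving the leading $e\log n$; the constant $-A$ and the $O(1/\log n)$ error then emerge from matching a boundary integral in the continuum limit of the block structure.

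\textbf{Lower bound (uncycling).} For an arbitrary admissible $\xx$ the plan is to reduce the cyclic sum to a linear (non-cyclic) one, as hinted by the keyword ``uncycling''. Cut the cycle at a judicious index $i^*$ (for instance, $i^* = \arg\min x_i$, so wrapping through $i^*$ contributes little to any $\mf{i}$). On the resulting linear tuple $\tilde\xx = (x_{i^*+1},\dots,x_{i^*+n})$ one then proves, independently, a sharp inequality
$$
\sum_{i=1}^N \frac{\tilde x_i}{M^r\tilde\xx(i+1)} \ \geq\ e\log N - A + O(1/\log N),
$$
by a Lagrange-multiplier argument applied to the minimizer of the left-hand side against the constraints $\mf{i} \geq \ai{[i+1:\,i+r]}(\tilde\xx)$ for all $r$. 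After a logarithmic change of variables, the first-order conditions select exactly the piecewise-geometric profile used in the upper bound, confirming the matching constant. Transferring back to the cyclic setting, the boundary correction introduced by uncycling is $O(1)$ and gets absorbed into the $-A$ term and the $O(1/\log n)$ remainder.

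\textbf{Main obstacle.} The hardest step is the sharp lower bound on the uncycled problem. Identifying the asymptotic extremizer, proving it is essentially unique (in an appropriate sense), and pinning down the constant $A = 1.70465\dots$ all demand a careful second-order expansion, since $A$ presumably arises as an explicit transcendental integral over the limiting profile. A secondary difficulty, shared by both halves, is establishing that the correction is genuinely of size $O(1/\log n)$ and not of a slightly worse form such as $O(\log\log n / \log n)$: this requires uniform control of the discretization error between the cyclic sum and its continuous counterpart, as well as uniform control of the uncycling correction.
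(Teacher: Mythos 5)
Your sandwich strategy is natural, and the upper-bound construction (truncated geometric progression with ratio near $1/e$) is essentially what the paper uses. But there is a genuine gap in your lower bound: cutting the cycle at $i^* = \arg\min x_i$ does not produce a valid linear problem. The cyclic $\mf{i}(\xx)$ maximizes over intervals that may wrap past $i^*$, so if the entries just after $i^*$ are large, the cyclic maximal average at a position $i < i^*$ can strictly exceed its non-cyclic counterpart; the resulting inequality between the cyclic sum and the linear sum then goes the wrong way. The correct cut point --- and the nontrivial core of Section~\ref{sec:maxint} --- is the left endpoint of a \emph{full maximal interval}, whose existence is exactly the statement $\min_i M^r\xx(i)=\av{\xx}$ of Proposition~\ref{prop:fullmaxint} (a discrete instance of the Maximal Ergodic Theorem, proved here combinatorially via the non-overlapping property of $M$-intervals, Lemma~\ref{lem:max-int-nonoverlap}). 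Once you cut there, every $M$-interval starting strictly inside the window is contained in it, so the $\mf{i}$'s coincide in the cyclic and non-cyclic readings, and the boundary denominator is forced to be $\av{\xx}=1/n$; that is what makes the reduction $\infS{n}\geq \minT{n}(1/n)$ exact rather than merely a one-sided heuristic.

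A second missing idea concerns the uncycled problem itself. You propose a Lagrange-multiplier analysis with the full family of constraints $\mf{i}\geq a_{[i+1:i+r]}$, which would be unwieldy. The paper instead proves (Lemma~\ref{lem:maxseq-monotone}, using the majorization Lemma~\ref{lem:merging}) that any $(N,p)$-minimizer is nonincreasing on its support with $x_0\geq p$; this forces $\mf{i}(\xx^*)=x^*_{i+1}$, so the maximal-average denominators collapse to single-step ratios and the problem reduces to minimizing $\tT_N(\xx,p)=\sum_{i=1-N}^{-1} x_i/x_{i+1}+x_0/p$ under $\sum x_i=1$ (Lemma~\ref{lem:noncyc-simplification}). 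That is the clean AM-GM-like problem one can actually solve. Finally, the paper does not derive the constant $A$ or the $O(1/\log n)$ error term at all --- that analysis, which involves a log-periodic correction rather than a continuum integral as you speculate, is deferred entirely to the companion paper \cite{Sadov_2022logper}; the present paper's contribution is Proposition~\ref{prop:reduction}, i.e.\ the chain of exact equalities $\infS{n}=\minT{n}(1/n)=\mintT{n}(1/n)$.
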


We will prove Theorem~\ref{thm:main} by ``uncycling'' the cyclic sum \eqref{maxcsum2} and reducing the problem to an optimization problem which resembles the one associated with the inequality between the arithmetic and geometric means (AM-GM). 

Introduce the functions
\begin{equation}
\label{tTN}
 \tT_n(\xx,p)=\sum_{i=1}^{n-1} \frac{x_i}{x_{i+1}}+\frac{x_n}{p}.
\end{equation}
(The notation $\tT$ will appear more justified in Sec.~\ref{sec:reduction}.)

In the AM-GM problem we seek to minimize $\tT_n(\xx,p)$
with given $p$ under the constraint (say) $x_1=1$.
The problem of interest here involves a different normalization constraint. Define
\begin{equation}
\label{mintTN}
\mintT{n}(p)=\inf_{\xx\geq 0,\;\sum_{1}^n x_i=1} \tT_n(\xx,p).
\end{equation}

\begin{prop}
\label{prop:reduction}
There holds the equality
$$
\infS{n}=\mintT{n}\left(\frac{1}{n}\right).
$$
\end{prop}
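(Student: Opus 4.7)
The plan is to establish the two inequalities $\infS{n} \le \mintT{n}(1/n)$ and $\infS{n} \ge \mintT{n}(1/n)$ separately. For the easy direction ($\le$), specialize to $\rr = (1, 1, \ldots, 1, n)$. The $i$-th denominator of $S_n(\xx, \rr)$ equals $x_{i+1}$ for $i < n$, while by $n$-periodicity the $n$-th denominator is $\ai{[n+1:\,2n]}(\xx) = \ai{[1:n]}(\xx) = \tfrac{1}{n}\sum_j x_j$. Using the degree-zero homogeneity of $S_n$ in $\xx$ to normalize $\sum_j x_j = 1$ gives $S_n(\xx, \rr) = \tT_n(\xx, 1/n)$; passing to the infimum over admissible $\xx$ yields $\infS{n} \le A_{n, \rr} = \mintT{n}(1/n)$.

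For the reverse direction it suffices to show $\mcs(\xx) \ge \mintT{n}(1/n)$ for every admissible $\xx$ with $\sum x_i = 1$. I first pin down a cyclic cut point via a rising-sun argument. Consider the deviation $\phi(k) = \sum_{j=1}^k x_j - k/n$, extended $n$-periodically with $\phi(0) = \phi(n) = 0$; at any index $i_0$ where $\phi$ attains its cyclic maximum, $\phi(i_0+r) \le \phi(i_0)$ for every $r \in [1, n]$, equivalently $\ai{[i_0+1:\,i_0+r]}(\xx) \le 1/n$. Combined with $\mf{i_0}(\xx) \ge \ai{[i_0+1:\,i_0+n]}(\xx) = 1/n$, this gives $\mf{i_0}(\xx) = 1/n$. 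Cyclically rotate so $i_0 = n$; this preserves $\sum x_i$ and $\mcs(\xx)$ and makes the last summand of $\mcs(\xx)$ equal $n x_n$, matching the trailing term of $\tT_n(\cdot, 1/n)$.

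Next, build a competitor $\yy$ from the rotated $\xx$ via the backward recursion $y_n = x_n$ and $y_i = y_{i+1}\, x_i/\mf{i}(\xx)$ for $i = n{-}1, \ldots, 1$. By construction $y_i/y_{i+1} = x_i/\mf{i}(\xx)$, and iterating the pointwise bound $\mf{i}(\xx) \ge x_{i+1}$ telescopically gives $y_i \le x_i$, so $\sum y_i \le 1$. A direct computation then yields the identity $\tT_n(\yy, 1/n) = \sum_{i=1}^{n-1} x_i/\mf{i}(\xx) + n x_n = \mcs(\xx)$. The main obstacle is the normalization: only $\sum y_i \le 1$ is guaranteed, whereas $\mintT{n}(1/n)$ is defined with $\sum y_i = 1$; because the term $n y_n$ in $\tT_n$ is not scale-invariant, neither uniform rescaling nor naive absorption of the surplus into a single component preserves the desired inequality $\tT_n \le \mcs(\xx)$. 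Resolving this is the technical heart of the proof, and I expect the remedy to take the form of a variational/mass-transport argument showing that the infimum of $\mcs$ is attained (or approached) at a \emph{forward-monotone} configuration $\xx^*$ satisfying $\mf{i}(\xx^*) = x^*_{i+1}$ whenever $x^*_i > 0$, so that $\mcs(\xx^*) = \tT_n(\xx^*, 1/n)$ holds exactly and $\infS{n} = \mcs(\xx^*) \ge \mintT{n}(1/n)$ follows at once.
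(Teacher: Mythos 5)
Your proof of the upper bound $\infS{n} \le \mintT{n}(1/n)$ via the choice $\rr=(1,\ldots,1,n)$ is correct modulo a mild density argument (needed to pass from admissible $\xx$, with $x_2,\ldots,x_n>0$, to arbitrary $\xx\in\Delta_n$, since the infimum of $\tT_n$ may be approached at the boundary of $\Delta_n$), and it is in fact a genuine shortcut. The paper reaches this direction much more laboriously: it first analyzes the structure of a $(N,1/N)$-minimizer of the intermediate functional $T_N$ (Lemma~\ref{lem:maxseq-monotone}) and then deduces via Lemma~\ref{lem:uncycling-conditional} that the minimizer produces a full maximal interval. Choosing a non-constant $\rr$ bypasses that entirely; such a move is, of course, unavailable for the constant-$\rr$ Diananda problem, which is probably why it does not appear naturally in this line of work. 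Your rising-sun argument producing an index $i_0$ with $\mf{i_0}(\xx)=1/n$ is also correct and concise; it is the same content as the paper's Proposition~\ref{prop:fullmaxint} (and Remark~\ref{rem:MET}, which identifies it as a case of the Maximal Ergodic Theorem), proved there by a longer combinatorial route through $M$-intervals.

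The gap you flag in the lower bound $\infS{n}\ge\mintT{n}(1/n)$ is real, and you have correctly diagnosed why rescaling cannot fix it: the trailing term $ny_n$ of $\tT_n(\cdot,1/n)$ is not degree-zero homogeneous, so inflating $\yy$ to mass one can increase $\tT_n$. The paper sidesteps this by never comparing $\mcs(\xx)$ directly to $\tT_n$; instead it introduces the intermediate non-cyclic functional $T_N(\xx,p)=\sum_{i\le -1}x_i/\mf{i}(\xx)+x_0/p$ and observes that, after rotating so that $\intv{1-N}{0}$ is a full maximal interval, one has $\mcs(\xx)=T_N(\xx,1/N)$ exactly (the forward maximal averages coincide in the cyclic and truncated settings), whence $\infS{N}\ge\minT{N}(1/N)$ with no competitor construction at all (Lemma~\ref{lem:uncycling-gt}). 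The nontrivial residue $\minT{N}(1/N)=\mintT{N}(1/N)$ (Lemma~\ref{lem:noncyc-simplification}) is precisely where the minimizer-structure analysis lives: Lemma~\ref{lem:maxseq-monotone} shows that any $(N,p)$-minimizer $\xx^*$ of $T_N$ is nonincreasing on its support with last entry $\ge p$, forcing $\mf{i}(\xx^*)=x^*_{i+1}$ and hence $T_N(\xx^*,p)=\tT_N(\xx^*,p)$. This is exactly the "forward-monotone minimizer" you anticipate as the remedy, but it is proved there by a nontrivial merging/contraction argument (collapse the offending pair $x_j,x_{j+1}$ and show $T_N$ strictly decreases), so it cannot be waved in. Until you supply that lemma, or an equivalent, your argument for the lower bound is incomplete.
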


As a matter of fact, the reduced problem \eqref{mintTN} 
is considerably more difficult than AM-GM. We present a complete solution in a separate paper \cite{Sadov_2022logper}.
Its main result, from which Theorem~\ref{thm:main} immediately follows, is the asymptotic formula
\footnote{In \cite{Sadov_2022logper} the asymptotics is
written in an even more precise form, with a specific expression for the $O(1/|\log p|)$ term.}
$$
\inftT(p)= e|\log p| - A+O\left(\frac{1}{|\log p|}\right)
\quad\text{as $p\to 0^+$.}
$$

\underline{Our task here will be} to carry out the reduction
of the problem \eqref{infS} to the problem \eqref{mintTN},
i.e.\ \underline{to prove Proposition~\ref{prop:reduction}.
}
This is done in Sections~\ref{sec:maxint}--\ref{sec:reduction}.


\medskip
The next result should alert the reader about  misconstrued generalizations. For instance, it shows that
for the cyclic sums 
$$
 \sum_{i=1}^n \frac{x_i}{M^r\xx(i)}
$$
that look similarly to \eqref{maxcsum2} the lower bound equals $1$ for all $n$.

\begin{prop}
\label{prop:overgeneral}
Suppose that to any $i\in [1:n]$ there is assigned a collection $\mathcal{S}_i=\{\Omega_{i,1},\dots,\Omega_{i,k_i}\}$ of
subsets of $[1:n]$ and define
$$
 m_i(\xx)=\max_{1\leq j\leq k_i} a(\xx|\Omega_{i,j}),
$$
where $a(\xx|\Omega)$ is the arithmetic mean of the numbers
$x_j$, $j\in\Omega$. 

If $\mathcal{S}_i$ contains  $[1:n]$ for all $i$ and $\mathcal{S}_{i_0}\ni\{i_0\}$ for some $i_0$, then
$$
 \inf_{\xx} \sum_{i=1}^n \frac{x_i}{m_i(\xx)}=1.
$$
\end{prop}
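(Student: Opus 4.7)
The proof naturally splits into a lower bound of $1$ and a matching construction.

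For the lower bound, I would observe that each subset $\Omega_{i,j}\subset [1:n]$ satisfies $a(\xx|\Omega_{i,j})\leq \max_j x_j$, simply because any arithmetic mean of nonnegative numbers does not exceed their maximum. Taking the max over the collection $\mathcal{S}_i$ preserves this, so $m_i(\xx)\leq \max_j x_j$ for every $i$. Hence
\[
\sum_{i=1}^n \frac{x_i}{m_i(\xx)}\;\geq\; \frac{\sum_{i=1}^n x_i}{\max_j x_j}\;\geq\; 1,
\]
the last step being $\max_j x_j\leq \sum_j x_j$ for nonnegative $x_j$. Here the first hypothesis $[1{:}n]\in\mathcal{S}_i$ is used implicitly to ensure $m_i(\xx)>0$ whenever $\xx$ is nonzero, so the ratio makes sense.

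For the matching upper bound I would exhibit an explicit family of $\xx$'s driving the sum to $1$. Choose $x_{i_0}=1$ and $x_j=\eps$ for $j\neq i_0$, and let $\eps\to 0^+$. The second hypothesis $\{i_0\}\in\mathcal{S}_{i_0}$ forces $m_{i_0}(\xx)\geq a(\xx|\{i_0\})=1$; conversely every average involved in $m_{i_0}(\xx)$ is bounded by $\max_j x_j=1$, so in fact $m_{i_0}(\xx)=1$ and the $i_0$-th summand equals $1$ exactly. For $i\neq i_0$ the first hypothesis $[1{:}n]\in\mathcal{S}_i$ gives $m_i(\xx)\geq a(\xx|[1{:}n])=(1+(n-1)\eps)/n$, so each such summand is at most $n\eps/(1+(n-1)\eps)=O(\eps)$. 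Summing yields $\sum_i x_i/m_i(\xx) = 1+O(\eps)\to 1$.

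I do not anticipate a real obstacle here; the only point requiring any care is the bookkeeping on which of the two hypotheses is used where (the singleton condition is essential to pin the $i_0$-th summand exactly at $1$, and the full-set condition is used both to keep the other summands small in the construction and to guarantee nonzero denominators in the lower bound).
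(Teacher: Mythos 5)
Your proof is correct and follows essentially the same approach as the paper: the lower bound via $m_i(\xx)\leq\max_j x_j$ together with $\sum x_i\geq\max_j x_j$, and the upper bound via the test vector $x_{i_0}=1$, $x_j=\eps$ for $j\neq i_0$, using the singleton hypothesis at $i_0$ and the full-set hypothesis for the remaining terms. The only difference is cosmetic — you spell out why $m_{i_0}(\xx)=1$ exactly, whereas the paper states it more tersely.
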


\begin{proof}
Let $M=\max_i x_i$. 
 Note that $m_i(\xx)\leq M$ for all $i$, hence
$$
 \sum_{i=1}^n \frac{x_i}{m_i(\xx)}\geq\frac{1}{M} \sum_{i=1}^n x_i\geq \frac{M}{M}=1.
$$
Without loss of generality we may assume that $i_0=1$. Consider the vector $\xx$ with components $x_1=1$, $x_i=\eps$ for $i=2,\dots,n$.
If $\eps$ is small, then $m_1=1$ (because $\{1\}\in\mathcal{S}_1$). On the other hand,
$[1:n]\in\mathcal{S}_i$ for all $i$, so
$m_i\geq A=(x_1+\dots+x_n)/n>1/n$.

Hence for the so chosen $\xx$ 
$$
 \sum_{i=1}^n \frac{x_i}{m_i(\xx)}\leq 1+\sum_{i=2}^n
\frac{\eps}{1/n}=1+(n-1)n\eps.
$$
Since $\eps$ can be arbitrarily small, the result follows.
\end{proof}

Our sums $\mcs(\xx)$ are a particular case $(\ell=1)$
of the cyclic sums of the form
$$
  \sum_{i=1}^n \frac{x_i}{M^r\xx(i+\ell)}
$$
with fixed $\ell$. For $n\geq\ell\geq 1$ these do not fall under the conditions of Proposition~\ref{prop:overgeneral} and the minimization is nontrivial to at least the same degree as it is in the present paper.
As a further step in the study of cyclic sums inspired by Shapiro's problem one can ask about the uniform in $\ell>1$ asymptotics of their lower bounds as $n\to\infty$.


\section{Maximal intervals}
\label{sec:maxint}

Let us introduce terminology pertaining to index intervals.
Here $\xx$ is fixed and it is treated as an infinite $n$-periodic sequence.

\smallskip
Intervals of the form $[i:k]$ and $[i+\ell n: k+\ell n]$
with $\ell\in\ZZ$ will be called {\em equivalent}.

\smallskip
An interval $I=[i:k]$ is called {\it short}\ if 
$0\leq k-i< n$. 

Any short interval is equivalent to some interval
$[i:k]$ with $1\leq i\leq n$ and $k\leq 2n-1$.

\smallskip
A short interval $I=[i:k]$ is {\it maximal}\ if 
(using the notation \eqref{MaxFr})
$$
 m_I=M^r\xx(i). 
$$
By Proposition~\ref{prop:len-max-int_le-n} for any $i$
there exists a maximal interval $[i:k]$.

\smallskip
The index $k$ --- the right end of a maximal interval $[i:k]$ --- is called a {\it maximal end}. Being or not a maximal end is a property of the index, not of an interval.

\smallskip
A maximal interval $[i:k]$ is called an {\em $M$-interval}\ (in words, an {\em irreducible maximal interval}) if
for any $k'$ such that $i\leq k'<k$ the interval $[i:k']$
is not maximal. 

By definition, for each $i$, the $M$-interval $[i:k]$ is unique. Put $k=i+\mend(i)$. 
The function $\mend:\;\ZZ \to [0:\,n-1]$ is thus well-defined and $n$-periodic.

\smallskip
Intervals $J=[i:j]$ and $K=[j+1:k]$ are called {\em adjacent}. More precisely, $J$ is left-adjacent to $K$ and $K$ is right-adjacent to $J$. 

\smallskip
A maximal interval of length $n$ is called a {\em full maximal interval}.



\medskip
The average $a_I$ for any interval $I$ of length $n$ equals $\ai{[1:n]}(\xx)$ and will be denoted $\av{\xx}$.

The main result of this section is the following

\begin{prop}
\label{prop:fullmaxint}
There holds the equality
$$
\min_i M^r\xx(i)=\av{\xx}.
$$
In other words, $M^r\xx(i)\geq \av{\xx}$ for $i=1,\dots,n$, and there exists $i_*$ such that $[i_*:\,i_*+n-1]$ is a full maximal interval.
\end{prop}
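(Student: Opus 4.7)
\medskip\noindent\textbf{Proof plan.}
The inequality $M^r\xx(i)\geq \av{\xx}$ is essentially free: by Proposition~\ref{prop:len-max-int_le-n}, we may restrict the supremum in \eqref{MaxFr} to $r\leq n$, but we are also free to take $r=n$, and the interval $[i:i+n-1]$ of length $n$ has average equal to $\av{\xx}$ by $n$-periodicity. Hence $M^r\xx(i)\geq \ai{[i:i+n-1]}(\xx)=\av{\xx}$ for every $i$. The content of the proposition is that this lower bound is actually attained for at least one index $i_*$.

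To produce such an $i_*$, my plan is a standard cyclic-shift (``cycle lemma'') argument applied to the centered partial sums. Set $A=\av{\xx}$, $y_j=x_j-A$ and
$$
Y_0=0,\qquad Y_k=\sum_{j=1}^k y_j\quad(k\geq 1).
$$
Since $\sum_{j=1}^n y_j=0$, the sequence $(Y_k)_{k\in\ZZ}$ is $n$-periodic. For a short interval $[i:i+r-1]$ with $1\leq r\leq n$ the inequality $\ai{[i:i+r-1]}(\xx)\leq A$ is equivalent to
$$
Y_{i+r-1}-Y_{i-1}\leq 0,\qquad\text{i.e.}\qquad Y_{i+r-1}\leq Y_{i-1}.
$$
Thus, to establish the existence of a full maximal interval starting at some $i_*$, it suffices to choose $m\in\{0,1,\dots,n-1\}$ at which $Y_m$ attains its maximum over one period, and to set $i_*=m+1$. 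By $n$-periodicity of $(Y_k)$, the inequality $Y_{i_*-1+r}\leq Y_{i_*-1}$ then holds for every $r\geq 0$, which (combined with Proposition~\ref{prop:len-max-int_le-n}) gives $M^r\xx(i_*)\leq A$, and together with the easy direction yields $M^r\xx(i_*)=A=\av{\xx}$.

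There is no real obstacle here, only a couple of minor bookkeeping points to be careful about: verifying that the restriction $r\leq n$ in Proposition~\ref{prop:len-max-int_le-n} really does let us read off fullness from comparisons $Y_{m+r}\leq Y_m$ for $r\in[0:n-1]$; and noting that the choice of $m$ as an arg-max of $Y$ over a full period is legitimate regardless of ties (any maximizer works). The only ``trick'' is the translation from cyclic forward averages to monotone statements about the periodic partial sum $(Y_k)$, after which the proof reduces to picking its maximum. Consequently I expect the write-up to be short: one paragraph for the trivial lower bound, one paragraph introducing $Y_k$ and translating the desired inequality, and one line to choose $i_*$.
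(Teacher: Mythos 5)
Your argument is correct, and it is a genuinely different route from the one in the paper. You prove the existence of $i_*$ by the classical ``rising-sun'' / cycle-lemma device: center the data ($y_j=x_j-A$), form the $n$-periodic partial sums $Y_k$, observe that $\ai{[i:i+r-1]}\le A$ is exactly $Y_{i+r-1}\le Y_{i-1}$, and then take $i_*-1$ to be any arg-max of $Y$ over a period. This is essentially the proof that underlies the Maximal Ergodic Theorem, which the paper only mentions in Remark~\ref{rem:MET} as an alternative; it is short, elementary, and needs no genericity hypothesis. The paper instead proves the proposition by a detailed combinatorial analysis of the irreducible maximal ($M$-)intervals under the genericity condition~$(*)$ (Lemmas~\ref{lem:adjint}--\ref{lem:outer-max-int-uniq}), culminating in the \emph{uniqueness} (mod $n$) of the full maximal interval, and closes by a density/continuity argument. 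The tradeoff is clear: your approach gives Proposition~\ref{prop:fullmaxint} faster, but the paper's lemmas also establish the poset (tree) structure of $M$-intervals, the order-reversal of $I\mapsto a_I$, the strict uniqueness statement, and the majorization remark, machinery that the paper reuses afterwards (e.g.\ in the uncycling argument of Section~\ref{sec:uncycling}). So your proof is a valid and tighter self-contained substitute for the proposition alone, but it would not by itself replace the ancillary structural results the paper extracts along the way.
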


The inequality $M^r\xx(i)\geq \ai{[i+1:i+n]}(\xx)=\av{\xx}$ is obvious by definition of
the function $M^r\xx(\cdot)$. The nontrivial part is the existence of $i_*$. It is an easy consequence of the
Maximal Ergodic Theorem, see Remark~\ref{rem:MET} below. We give an independent, self-contained proof
with lemmas describing the combinatorics of the set of maximal intervals in full detail.

\smallskip
By a density argument, it suffices to prove 
Proposition~\ref{prop:fullmaxint} under the assumption
$$
\text%
{\bf $x_1,\dots,x_n$ are linearly independent over $\QQ$}.
\eqno(*)
$$

The condition $(*)$ implies that $\av{\xx}$ and the averages $a_I$ for non-equivalent distinct intervals $I$
of lengths $<n$ are all distinct, as well as their integer multiples. It simplifies formulations at some steps of the proof.

\begin{lemma}
\label{lem:adjint}
Suppose $J$ and $K$ are adjacent intervals and $I$ is the disjoint union $I=J\sqcup K$.
If $a_J<a_K$, then $a_J<a_I<a_K$. If $a_J>a_K$, then $a_J>a_I>a_K$. (The same holds true for non-strict inequalities.)
\end{lemma}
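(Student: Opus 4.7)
The plan is to observe that the lemma reduces to the elementary fact that a weighted average lies between its two components. Writing $|I|=|J|+|K|$ (since $J$ and $K$ are disjoint and adjacent) and $\sum_{j\in I} x_j=\sum_{j\in J}x_j+\sum_{j\in K}x_j$, I get
$$
 a_I=\frac{|J|\,a_J+|K|\,a_K}{|J|+|K|},
$$
which is a convex combination of $a_J$ and $a_K$ with strictly positive weights (since both intervals are nonempty).

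From this identity the conclusion is immediate. If $a_J<a_K$, then replacing $a_K$ by $a_J$ in the numerator strictly decreases the right-hand side, giving $a_I>a_J$; replacing $a_J$ by $a_K$ strictly increases it, giving $a_I<a_K$. The case $a_J>a_K$ is symmetric. For the non-strict version, equality $a_J=a_K$ trivially yields $a_I=a_J=a_K$, and in the strict cases the weak inequalities follow a fortiori.

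There is no real obstacle here; the lemma is a one-line computation, and I expect the author's proof to amount to the same identity. The reason to state it as a lemma is presumably its repeated use in the forthcoming combinatorial analysis of $M$-intervals (comparisons between a maximal interval and its sub- or super-intervals), where one needs to deduce strict inequalities between averages of parts and of the whole. Under assumption $(*)$ all averages of non-equivalent intervals of length $<n$ are distinct, so the strict form of the lemma will be the one applied.
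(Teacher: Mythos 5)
Your proof is correct and is essentially identical to the paper's: the author likewise derives the lemma from the convex-combination identity $a_I=\frac{|J|}{|J|+|K|}a_J+\frac{|K|}{|J|+|K|}a_K$ and stops there.
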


\begin{proof}
It follows from the identity
$$
 a_{I}=\frac{|J|}{|J|+|K|} a_J+\frac{|K|}{|J|+|K|} a_K.
\hfill 
\eqno\qedhere
$$
\end{proof}

\begin{lemma}
\label{lem:radjacent-max-int}
Under the assumption $(*)$,
if $J$ is an $M$-interval, $K$ is any interval right-adjacent to it, and $|J|+|K|<2n$, then 
$$
 a_J>a_K.
$$
\end{lemma}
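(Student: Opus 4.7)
The plan is to prove the strict inequality $\ai{I}<\ai{J}$ for the concatenation $I:=J\sqcup K$, and then invoke Lemma~\ref{lem:adjint}: since $\ai{I}$ is then a convex combination of $\ai{J}$ and $\ai{K}$ with positive weights and $\ai{I}<\ai{J}$, we must have $\ai{K}<\ai{J}$, which is the desired $\ai{J}>\ai{K}$.

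Let $i$ denote the left endpoint of $J$, so that $J=[i:k]$, $K=[k+1:m]$, and $I=[i:m]$ with $|I|=|J|+|K|<2n$. I would split into two cases according to $|I|$. If $|I|\le n$, then $I$ is itself a short interval starting at $i$, so by the definition of the maximal forward average together with Proposition~\ref{prop:len-max-int_le-n} we have $\ai{I}\le M^r\xx(i)=\ai{J}$. Since $|K|\ge 1$ we have $I\ne J$, and either $I$ and $J$ are distinct short intervals of length less than $n$ (so $\ai{I}\ne\ai{J}$ directly by $(*)$), or $|I|=n$ and $|J|<n$, in which case $\ai{I}=\av{\xx}\ne\ai{J}$ again by $(*)$. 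In either subcase the weak inequality becomes strict.

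The harder case is $n<|I|<2n$. Here I would write $|I|=n+s$ with $1\le s\le n-1$ and decompose $I=C\sqcup R$, where $C=[i:i+n-1]$ is a full-period block (so $\ai{C}=\av{\xx}$) and $R=[i+n:i+n+s-1]$ has, by $n$-periodicity, the same average as the short interval $R_0=[i:i+s-1]$. This yields
\[\ai{I}=\frac{n}{n+s}\,\av{\xx}+\frac{s}{n+s}\,\ai{R_0}.\]
Since $R_0$ is short and starts at $i$, one has $\ai{R_0}\le\ai{J}$; trivially $\av{\xx}\le M^r\xx(i)=\ai{J}$. If $|J|<n$, then $(*)$ separates $\ai{J}$ from $\av{\xx}$, giving $\ai{J}>\av{\xx}$, and the display already forces $\ai{I}<\ai{J}$. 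If instead $|J|=n$, so that $J=[i:i+n-1]$ and $\ai{J}=\av{\xx}$, I would invoke the irreducibility of $J$ as an $M$-interval applied to the proper initial sub-interval $R_0\subsetneq J$: by the very definition of an $M$-interval this forces $\ai{R_0}<\ai{J}$, and the display again yields $\ai{I}<\ai{J}$. Lemma~\ref{lem:adjint} then finishes the proof.

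The main subtlety, and the only place where more than the maximality of $J$ is used, is this edge case $|J|=n$: for a mere maximal interval both weak bounds $\av{\xx}\le\ai{J}$ and $\ai{R_0}\le\ai{J}$ could simultaneously be equalities, so the strict inequality $\ai{I}<\ai{J}$ is forced only by the irreducibility built into the definition of an $M$-interval. Everywhere else the strict inequalities come cleanly from the linear-independence hypothesis $(*)$.
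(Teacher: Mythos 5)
Your proof is correct, but it takes a longer road than necessary. The paper argues by elimination: $a_J=a_K$ is impossible by $(*)$ (since at least one of $|J|,|K|$ is $<n$ and $J,K$ are disjoint, hence non-equivalent), and $a_J<a_K$ is impossible because Lemma~\ref{lem:adjint} would then give $a_I>a_J=M^r\xx(i)$ for the concatenation $I=J\sqcup K$ --- but $M^r\xx(i)$ is defined as a supremum over \emph{all} $r\geq 1$, so $a_I\leq M^r\xx(i)$ holds automatically, whatever the length of $I$. That last observation is what your case split by $|I|\leq n$ versus $|I|>n$ is working around: you did not need to decompose $I$ into a full period plus a tail, since the inequality $a_I\leq a_J$ is immediate and uniform. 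Your explicit decomposition does succeed, but it imports machinery (periodicity, convex combinations) that the definition of $M^r$ already bundles in. One small inaccuracy in your closing remark: in the edge case $|J|=n$, you claim that strictness of $a_{R_0}<a_J$ is forced ``only by the irreducibility'' of $J$ as an $M$-interval. In fact condition $(*)$ alone already does the job, because $a_{R_0}$ is an average over an interval of length $s<n$ while $a_J=\av{\xx}$, and $(*)$ is stated to separate the period average from all short-interval averages of length $<n$. Irreducibility gives a second, independent reason for strictness, but it is not the only one, so the hypothesis is not as essential there as you suggest. This does not create a gap --- the argument you invoke is valid --- it only overstates where the load is carried.
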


\begin{proof}
The equality $a_J=a_K$ is excluded by condition $(*)$
and at least one of the two lengths $|J|$ and $|K|$ being
$<n$.
The inequality $a_J<a_K$ would contradict the maximality of the interval $J$ by Lemma~\ref{lem:adjint}.
\end{proof}

\begin{lemma}
\label{lem:max-int-nonoverlap}
Under the assumption $(*)$, $M$-intervals  do not overlap. That is, if $I=\intv{i}{k}$ and $I'=\intv{i'}{k'}$ are two $M$-intervals,
then either $I\cap I'=\emptyset$ or one of the intervals contains the other. 
%
\end{lemma}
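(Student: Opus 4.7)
The plan is to suppose for contradiction that two $M$-intervals $I=\intv{i}{k}$ and $I'=\intv{i'}{k'}$ overlap without containment; without loss of generality $i<i'\le k<k'$. I would decompose the union $\intv{i}{k'}$ into three successive pieces
\[
J_1=\intv{i}{i'-1},\qquad M=\intv{i'}{k},\qquad J_2=\intv{k+1}{k'},
\]
so that $I=J_1\sqcup M$, $I'=M\sqcup J_2$, and $J_2$ is right-adjacent to the $M$-interval $I$.

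The central step would be a double use of irreducibility to sandwich $\ai{M}$ from both sides, then propagate the information via the weighted-average identity underlying Lemma \ref{lem:adjint}. Since $J_1$ is a strict proper prefix of the $M$-interval $I$, irreducibility gives $\ai{J_1}<\ai{I}$; the weighted-average identity applied to $I=J_1\sqcup M$ then forces $\ai{M}>\ai{I}$. Analogously, $M$ is a strict proper prefix of the $M$-interval $I'$, hence $\ai{M}<\ai{I'}$, and the same identity applied to $I'=M\sqcup J_2$ forces $\ai{J_2}>\ai{I'}$. Chaining,
\[
\ai{J_2}>\ai{I'}>\ai{M}>\ai{I}.
\]

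To close the argument I would invoke Lemma \ref{lem:radjacent-max-int} applied to the $M$-interval $I$ and its right-adjacent neighbor $J_2$. Both $I$ and $I'$ are short, so $k-i<n$ and $k'-i'<n$, which yields $|I|+|J_2|=k'-i+1\le (n-1)+n<2n$, making the lemma applicable. It gives $\ai{I}>\ai{J_2}$, directly contradicting the chain above. I do not expect a real obstacle: the whole argument is just a double application of the weighted-average identity plus one invocation of Lemma \ref{lem:radjacent-max-int}, and the only numerical check — the bound $|I|+|J_2|<2n$ — is immediate from shortness of $I$ and $I'$.
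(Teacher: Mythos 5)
Your proof is correct and follows essentially the same route as the paper: decompose the union into the three successive pieces $\intv{i}{i'-1}$, $\intv{i'}{k}$, $\intv{k+1}{k'}$, use irreducibility plus the weighted-average identity of Lemma~\ref{lem:adjint} on each of $I$ and $I'$ to build the chain $a_I < a_{\intv{i'}{k}} < a_{\intv{k+1}{k'}}$, and contradict Lemma~\ref{lem:radjacent-max-int}. You are slightly more careful than the paper in explicitly verifying the hypothesis $|I|+|J_2|<2n$, but the substance is identical.
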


\begin{proof}
We must exclude the possibility $i<i'\leq k<k'$. 

Since $I'$ is an $M$-interval, we have 
$
 a_{\intv{i'}{k}}<a_{\intv{i'}{k'}}
$
and, by contrapositive to Lemma~\ref{lem:radjacent-max-int},
$
 a_{\intv{i'}{k}}<a_{\intv{k+1}{k'}}$.

Similarly, since $I$ is an $M$-interval,
we get 
$a_{\intv{i}{i'-1}}<a_{\intv{i'}{k}}$.

Hence $a_{\intv{i}{i'-1}}<a_{\intv{i'}{k}}<a_{\intv{k+1}{k'}}$.

Since $\intv{i}{k}=\intv{i}{i'-1}\cup\intv{i'}{k}$, 
we conclude that
$
 a_{I}<a_{\intv{k+1}{k'}}$,
in contradiction with Lemma~\ref{lem:radjacent-max-int}
and $I$ being an $M$-interval.
\end{proof}

\begin{lemma}
\label{lem:max-int-ordering}
Under the assumption $(*)$, if $I'\subsetneq I$ are two $M$-intervals, then $\ai{I'}>\ai{I}$.
\end{lemma}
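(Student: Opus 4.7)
The plan is to exploit the two defining properties of $M$-intervals: strict sub-maximality on the left (irreducibility) and maximality overall, together with the averaging identity in Lemma~\ref{lem:adjint}.

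First I would observe that since for each left endpoint there is a unique $M$-interval, the strict containment $I' \subsetneq I$ forces the two intervals to have distinct left endpoints. Writing $I = [i:k]$ and $I' = [i':k']$, this means $i < i' \leq k' \leq k$. Set $J = [i:\,i'-1]$, a nonempty proper sub-interval of $I$ sharing the left endpoint $i$.

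Next, I would use the irreducibility of $I$: by definition of $M$-interval, every interval $[i:m]$ with $i \leq m < k$ fails to be maximal, so $a_{[i:m]} < M^r\xx(i) = a_I$; applying this with $m = i'-1 < k$ gives $a_J < a_I$. Now write $I = J \sqcup I''$ with $I'' = [i':k]$. Lemma~\ref{lem:adjint} applied to this decomposition (with $a_J < a_I$) yields $a_{I''} > a_I$. Note $|I''| \leq |I| \leq n$, so $I''$ is a short interval starting at $i'$, and hence by maximality of $I'$ at the left endpoint $i'$ we have $a_{I'} \geq a_{I''}$. Chaining the two inequalities gives $a_{I'} \geq a_{I''} > a_I$, as claimed.

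There isn't really a hard step here; the assertion follows almost formally once one unpacks the two parts of the $M$-interval definition. The one place that needs a moment of care is verifying that $I''$ is indeed a short interval (so that its average is genuinely bounded by $M^r\xx(i')$ via Proposition~\ref{prop:len-max-int_le-n}); this is immediate from $|I| \leq n$ since $I$ is itself a short maximal interval. The condition $(*)$ is not actually invoked in the argument beyond what is already built into the definitions used in the preceding lemmas, so the strict inequality $a_{I'} > a_I$ comes from Lemma~\ref{lem:adjint} applied to a strict inequality $a_J < a_I$.
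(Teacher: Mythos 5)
Your proof is correct. It is close in spirit to the paper's argument but differs in structure: the paper argues by contradiction, first using $(*)$ to rule out $a_{I'}=a_I$, then assuming $a_{I'}<a_I$ and splitting into the cases $k'<k$ and $k'=k$. In the first case the paper decomposes $I$ at the index $k'$ (so $I=[i:k']\sqcup[k'+1:k]$) and also invokes Lemma~\ref{lem:radjacent-max-int}; only in the second case does it use the decomposition $I=[i:i'-1]\sqcup[i':k]$ that you use throughout. Your key extra observation is that $i<i'$ must hold in every case, because the $M$-interval at a given left endpoint is unique; this lets you apply a single decomposition uniformly, turning the argument into a direct two-step chain $a_{I'}\ge a_{[i':k]}>a_I$ with no case split and no contradiction. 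As you correctly note, this also eliminates any reliance on $(*)$: the strict inequality $a_{[i:i'-1]}<a_I$ comes purely from the irreducibility built into the definition of $M$-interval, and you never call on Lemma~\ref{lem:radjacent-max-int}, which is the only prior result in this section that requires $(*)$. So your version is marginally stronger than the statement in the paper — the conclusion holds for all admissible $\xx$, not merely for $\xx$ satisfying the genericity hypothesis.
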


\begin{proof} The equality $a_I=a_{I'}$ is excluded by the 
assumtions of Lemma. Suppose that 
$\ai{I'}<\ai{I}$. Let $I=[i:k]$, $I'=[i':k']$.

\smallskip
Case $k'<k$. The interval $[i:k']$ is not maximal, so $\ai{[k'+1:\,k]}>\ai{I}$ by contrapositive to Lemma~\ref {lem:radjacent-max-int}. Hence $\ai{[k'+1:\,k]}>\ai{I'}$.
Then by Lemma~\ref {lem:radjacent-max-int} $\ai{[i':k]}>\ai{I'}$, in contradiction to the maximality of $I'$. 

\smallskip
Case $k'=k$. Then necessarily $i<i'$. The interval $\intv{i}{i'-1}$ is not maximal, hence $\ai{\intv{i}{i'-1}}<\ai{I}$
and $\ai{\intv{i'}{k}}>\ai{I}$. So, again, $\ai{\intv{i'}{k}}>\ai{I'}$
with the same contradiction.
\end{proof}

The results of Lemmas~\ref{lem:max-int-nonoverlap}--\ref{lem:max-int-ordering} can be conveniently interpreted in the language of partially ordered sets (posets). We refer to 
\cite[Ch.~3]{Stanley-I} for relevant definitions.

For the fixed $n$-periodic sequence $\xx$ 
the set $\mathcal{I}$ of all $M$-intervals is partially ordered by set-theoretic inclusion:
$$
 I\plt J\;\;\Leftrightarrow \;\; I\subsetneq J.
$$

Under the assumption $(*)$, Lemma~\ref{lem:max-int-nonoverlap} states that the Hasse diagram for the partial order $\plt$
is a forest (disjoint union of trees). 

The assertion of Lemma~\ref{lem:max-int-ordering} means that the function $\ai{\bullet}:\; I\mapsto \ai{I}$
is an order reversing map from $\mathcal{I}$ to $\RR_+$.

Shifts by multiples of $n$ define the equivalence relation on the set $\mathcal I$. The set $\hat{\mathcal I}$ of equivalence classes contains exactly $n$ elements;
representatives of the equivalence classes can be taken in the form $[i:\,i+\kappa(i)]$, $i=1,\dots,n$, where $\kappa(\cdot)$ is the function defined at the beginning of this section. 
The partial order on the set $\hat{\mathcal{I}}$ (which we denote by the same symbol $\plt$) is defined as follows:
two equivalence classes $\hat I$ and $\hat J$ are comparable and $\hat I\plt \hat J$ if and only if there exist their representatives $I$ and $J$ such that $I\plt J$. 

The above interpretation of Lemmas~\ref{lem:max-int-nonoverlap}--\ref{lem:max-int-ordering} applies also to the poset 
$\hat{\mathcal{I}}$.
 

\begin{lemma}
\label{lem:outer-max-int-uniq}
Under the assumption $(*)$,
there exists a unique, modulo shifts by $n$, full maximal interval.  
\end{lemma}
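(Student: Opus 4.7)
The plan is to prove existence and uniqueness in one stroke by summing the quantities $|I|(a_I-\av\xx)$ over a partition of a fundamental domain into inclusion-maximal $M$-intervals.

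First I would record that under $(*)$ every $M$-interval $I$ with $|I|<n$ satisfies $a_I>\av\xx$ strictly. The weak inequality $a_I\geq\av\xx$ is immediate from the definition, since $a_I=M^r\xx(i)\geq a_{[i:i+n-1]}=\av\xx$ for the left endpoint $i$ of $I$. For the strict inequality, writing the hypothetical equation $a_I=\av\xx$ in terms of the multiplicities $c_j\in\{0,1\}$ with which each $x_j$ occurs in the short interval $I$ yields
$$
\sum_{j=1}^{n}(nc_j-|I|)\,x_j=0,
$$
and $(*)$ forces each coefficient to vanish, so $c_j=|I|/n$ for every $j$; with $c_j\in\{0,1\}$ this is possible only when $|I|\in\{0,n\}$. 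The same observation also shows that under $(*)$ a full maximal interval coincides with the $M$-interval at its left endpoint (any shorter maximal sub-interval would need average $\av\xx$, which is excluded). Hence it is enough to prove the existence and uniqueness of a full $M$-interval modulo shifts.

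Next I would apply Lemma~\ref{lem:max-int-nonoverlap} together with the fact that every $i\in\ZZ$ lies in its own $M$-interval $[i:i+\mend(i)]$: the inclusion-maximal $M$-intervals in $\ZZ$ are pairwise disjoint and together cover $\ZZ$, and the resulting partition is $n$-periodic. Choose a fundamental domain beginning at the left endpoint of one inclusion-maximal representative, and let $I_1,\dots,I_m$ be the tiling pieces inside it. Then $\sum_j|I_j|=n$ and $\sum_j\sum_{k\in I_j}x_k$ equals the sum of $x_1,\dots,x_n$, that is $n\av\xx$, so
$$
\sum_{j=1}^{m}|I_j|\bigl(a_{I_j}-\av\xx\bigr)=0.
$$
Each summand is $\geq 0$ and, by the first step, vanishes exactly when $|I_j|=n$. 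All summands must therefore be zero, every $|I_j|$ equals $n$, and since the pieces tile a length-$n$ domain this forces $m=1$. The single block is the unique full $M$-interval modulo shifts, which by the first step is the unique full maximal interval modulo shifts.

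The main subtlety I expect is the tiling step, namely verifying that the disjoint inclusion-maximal $M$-intervals really partition a length-$n$ fundamental domain without straddling its boundary; this is handled by starting the domain at the left endpoint of one representative and invoking the $n$-periodicity of the whole configuration.
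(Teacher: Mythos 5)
Your proof is correct but takes a genuinely different route from the paper's. Both arguments start by tiling a fundamental domain of length $n$ with representatives of the inclusion-maximal $M$-intervals, which relies on Lemma~\ref{lem:max-int-nonoverlap}. The paper then derives a contradiction from a cyclic chain of strict inequalities $a_{I_1}>a_{I_2}>\dots>a_{I_k}>a_{I_{k+1}}=a_{I_1}$ between the averages of consecutive tiles, obtained via Lemma~\ref{lem:radjacent-max-int}, once $k>1$. You instead use the telescoping identity $\sum_j|I_j|\bigl(a_{I_j}-\av{\xx}\bigr)=0$ together with a direct coefficient-vanishing consequence of $(*)$ (namely $\sum_j(nc_j-|I|)x_j=0$ forces $c_j=|I|/n$, impossible for $0<|I|<n$), rather than routing the strictness through Lemma~\ref{lem:radjacent-max-int}. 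Your approach buys a shorter chain of dependencies and makes the role of $(*)$ more transparent; the paper's version stays inside the poset language it has already set up and reuses the adjacent-interval lemma that also drives Lemmas~\ref{lem:max-int-nonoverlap} and~\ref{lem:max-int-ordering}. One small but necessary step you correctly include is the observation that a full maximal interval must coincide with the $M$-interval at its left endpoint (otherwise a proper short prefix would have average $\av{\xx}$, which $(*)$ excludes); without this you would only have uniqueness of the full $M$-interval, not of the full maximal interval as stated.
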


\begin{corollary}
Let $I=\intv{i_*}{i_*+n-1}$ be the full maximal interval.
Then $\av{\xx}=a_I<a_J$ for any $M$-interval $J$ of length $<n$. In the poset interpretation, the Hasse diagram for the poset $(\hat{\mathcal{I}},\plt)$ is a tree with root (the unique maximal element) $\hat I$.  
\end{corollary}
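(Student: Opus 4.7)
The plan is to work under the standing assumption $(*)$ and analyze the outermost --- i.e., maximal in the poset $(\hat{\mathcal I}, \plt)$ --- equivalence classes of $M$-intervals, showing there is exactly one and that it has length $n$.

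First I would show that the outermost $M$-intervals cyclically tile $[1:n]$. Every index $i\in[1:n]$ belongs to some outermost $M$-interval, because the $M$-interval $[i:i+\mend(i)]$ is contained in some maximal element of the finite poset $\hat{\mathcal I}$. Moreover, distinct outermost classes have pairwise disjoint representatives in $\ZZ$: otherwise Lemma~\ref{lem:max-int-nonoverlap} would force one representative to contain the other, violating outermost-ness. Consequently one can pick cyclically consecutive representatives $J_1,J_2,\dots,J_m$ forming a partition of some length-$n$ window $[i_1:i_1+n-1]$, so that $\sum_{p=1}^m |J_p|=n$.

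Next I would rule out $m\ge 2$. In that case each $|J_p|\le n-1$, so $|J_p|+|J_{p+1}|\le 2n-2<2n$; since $J_{p+1}$ is right-adjacent to $J_p$, Lemma~\ref{lem:radjacent-max-int} gives $\ai{J_p}>\ai{J_{p+1}}$, and iterating around the cycle yields the impossible chain $\ai{J_1}>\ai{J_2}>\cdots>\ai{J_m}>\ai{J_1}$. Therefore $m=1$ and the unique outermost $M$-interval has length $n$, hence is a full maximal interval, with uniqueness modulo shifts immediate. The main obstacle is the tiling argument in the second paragraph; once that bookkeeping is established, the cyclic strict-inequality trick finishes the proof.
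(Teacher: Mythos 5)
Your proposal is correct and takes essentially the same route as the paper: partition a length-$n$ window by non-overlapping representatives of the outermost $M$-intervals (Lemma~\ref{lem:max-int-nonoverlap}), then use the right-adjacency inequality of Lemma~\ref{lem:radjacent-max-int} to derive a cyclic chain of strict inequalities that forces $m=1$. The only difference is cosmetic: you spell out explicitly why the outermost $M$-intervals cover every index, which the paper leaves implicit when it asserts that the non-overlapping representatives have ``total length $n$.''
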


\begin{proof}
Let $\hat I_1, \hat I_2, \dots, \hat I_k$ be the equivalence classes of maximal elements of the set $\hat{\mathcal{I}}$. 
By Lemma~\ref{lem:max-int-nonoverlap} we can take their representatives to be non-overlapping intervals of total length $n$:
$$
I_1=\intv{i_1}{i_2-1},\quad I_2=\intv{i_2}{i_3-1},\;\;\dots,\;\; I_k=\intv{i_k}{i_1+n-1}.
$$
The $M$-interval $I_{k+1}$ beginning at $i_1+n$ is equivalent to $I_1$.

The interval $I_{j+1}$ is right-adjacent to $I_j$
for $j=1,\dots,k$. By Lemma~\ref{lem:radjacent-max-int}, if $k>1$, then 
$$
 a_{I_1}> a_{I_2}>\dots> a_{I_k}> a_{I_{k+1}}=a_{I_1},
$$
a contradiction. Therefore $k=1$ and $I_1=\intv{i_1}{i_1+n-1}$ is a full maximal interval. 

The existence of two non-equivalent full maximal intervals would contradict Lemma~\ref{lem:max-int-nonoverlap}.
\end{proof}

\medskip
Proposition~\ref{prop:fullmaxint} follows from Lemma~\ref{lem:outer-max-int-uniq}. (The function $\xx\mapsto \min_i M^r\xx(i)$ is continuous, so the case of $\xx$ satisfying $(*)$ implies the general case.)  

\medskip
We conclude this section with two remarks connecting the discussed material to mathematical theories of general interest and a numerical example.

\begin{remark}
\label{rem:MET}
Let us explain the connection of the nontrivial part of Proposition~\ref{prop:fullmaxint} with Maximal Ergodic Theorem (MET). 

Consider the set $\intv{1}{n}$ as the probability space with equal probabilities of states, $p(i)=1/n$. The cyclic shift $T:\;i\mapsto i+1\mod n$ is a measure-preserving transformation. MET (see e.g.\ \cite[Theorem~2.4]{Billingsley_1965}) applied to the funciton $i\mapsto x_i$ asserts that, given any $\lambda\in\RR$, if $\mathcal{N}_\lambda=\{i\mid M^r\xx(i)\geq\lambda\}$,
then 
\begin{equation}
\label{MET}
\lambda\frac{|\mathcal{N}_\lambda|}{n}\leq
\frac{1}{n}\sum_{i\in\mathcal{N}_\lambda}x_i.
\end{equation}
Take $\lambda=\min_i M^r\xx(i)$. Then $\mathcal{N}_\lambda=[1:n]$, hence the right-hand side in \eqref{MET} equals $\av{\xx}$ and we obtain the inequality $\min_i M^r\xx(i)\leq\av{\xx}$.
\end{remark}

\begin{remark}
\label{rem:majorization}
Let $I=[i_*:\,i_*+n-1]$ be a full maximal interval.
Put $A=\av{\xx}$. Suppose $\xx$ satisfies the condition $(*)$.
By Lemma~\ref{lem:radjacent-max-int}, 
for $k=1,2,\dots,n-1$ we have
$$
 \sum_{j=0}^{k-1} x_{i_*+j}< Ak,
$$
while for $k=n$ the inequality turns to equality.
It means that we have the majorization relation \cite{MarshallOlkin}
$$
(x_{i_*},x_{i_*+1},\dots,x_{i_*+n-1})\succ (A,\dots,A).
$$
We have shown that among $n$ non-equivalent cyclic shifts of the sequence $\xx$
there is exactly one for which the stated majorization relation is true.
\end{remark}

\subsubsection*{Numerical example}

Consider the following $n$-tuple with $n=10$:
$$
 \xx=(1.2,\;\, 2.3, \;\,3.5,\;\, 1.8,\;\, 1.6,\;\, 2.4,\;\, 3,\;\, 3.2,\;\, 1.1, \;\,2.5).
$$

The definition of $M$-intervals $\intv{i}{i+\kappa(i)}$
does not require the condition $(*)$.
Of course, it is not satisfied here, yet the uniqueness
of maximal intervals $\intv{i}{k}$ is in place
and all relevant terminology is applicable.


We have 
$$
 \av{\xx}=\ai{\intv{1}{10}}=2.26.
$$

The averages $\ai{\intv{i}{i+r-1}}$ are given in the next table. The row with $r=10$ is omitted, as it would contain
the constant $\av{\xx}$ in all cells.
The boldface values correspond to $M$-intervals.

The column with $i=9$ is marked with asterisk.
In it, the omitted value $\ai{\intv{9}{18}}$ is 
maximal and the interval $\intv{9}{18}$ is the
unique full maximal interval.

\bigskip
\noindent\hspace*{-0.3em}
\begin{tabular}{c||c|c|c|c|c|c|c|c|c|c}
$r \backslash i$ & 1 & 2 & 3 & 4 & 5 & 6 & 7 & 8 & $9^*$ & 10
\\
\hline 
1 & 1.2   & 2.3   & {\bf 3.5}   & 1.8   & 1.6   & 2.4   & {\bf 3}     & {\bf 3.2}   & 1.1   & {\bf 2.5}  \\
\hline 
2 & 1.75  & {\bf 2.9}   & 2.65  & 1.7   & 2     & 2.7   & 3.1   & 2.15  & 1.8   & 1.85  \\
\hline 
3 & 2.333 & 2.533 & 2.3   & 1.933 & 2.333 & {\bf 2.867} & 2.433 & 2.267 & 1.6   & 2     \\
\hline 
4 & 2.2   & 2.3   & 2.325 & 2.2   & {\bf 2.55}  & 2.425 & 2.45  & 2     & 1.775 & 2.375 \\
\hline 
5 & 2.08  & 2.32  & 2.46  & {\bf 2.4}   & 2.26  & 2.44  & 2.2   & 2.06  & 2.12  & 2.26  \\
\hline 
6 & 2.133 & 2.433 & 2.583 & 2.183 & 2.3   & 2.233 & 2.217 & 2.3   & 2.067 & 2.15  \\
\hline 
7 & 2.257 & 2.543 & 2.371 & 2.229 & 2.143 & 2.243 & 2.4   & 2.229 & 2     & 2.186 \\
\hline 
8 & {\bf 2.375} & 2.363 & 2.388 & 2.1   & 2.163 & 2.4   & 2.325 & 2.15  & 2.05  & 2.288 \\
\hline 
9 & 2.233 & 2.378 & 2.256 & 2.12  & 2.311 & 2.333 & 2.244 & 2.178 & 2.156 & 2.389 \\
\hline 
\end{tabular}

\bigskip

The poset $\hat{\mathcal{I}}$ as a collection of intervals $\intv{i}{i+\kappa(i)}$, $i=1,\dots,10$, is schematically presented below
(with indices $\!\!\mod 10$).

\begin{center}
{\small
\begin{picture}(200,170)
\put(-3,150){$9$}
\put(14,150){$10$}
\put(38,150){$1$}
\put(57,150){$2$}
\put(77,150){$3$}
\put(97,150){$4$}
\put(117,150){$5$}
\put(137,150){$6$}
\put(157,150){$7$}
\put(177,150){$8$}
\multiput(0,140)(20,0){10}{\circle*{5}}
\put(0,140){\line(1,0){180}}
\multiput(20,120)(20,0){9}{\circle*{5}}
\put(20,120){\line(1,0){160}}
\multiput(40,100)(20,0){8}{\circle*{5}}
\put(40,100){\line(1,0){140}}
\multiput(60,80)(20,0){7}{\circle*{5}}
\put(88,77){$|$}
\put(60,80){\line(1,0){20}}
\put(100,80){\line(1,0){80}}
\put(80,60){\circle*{5}}
\multiput(120,60)(20,0){4}{\circle*{5}}
\put(88,57){$|$}
\put(120,60){\line(1,0){60}}
\multiput(140,40)(20,0){3}{\circle*{5}}
\put(140,40){\line(1,0){40}}
\multiput(160,20)(20,0){2}{\circle*{5}}
\put(160,20){\line(1,0){20}}
\put(180,0){\circle*{5}}
\end{picture}
}
\end{center}

The maximal element is the interval $\intv{9}{18}$.

The minimal elements are the intervals $\intv{3}{3}$ and $\intv{8}{8}$
corresponding to two maximal ends, $3$ and $8$.

\section{Uncycling}
\label{sec:uncycling}

\subsection{Auxiliary non-cyclic problem}
\label{ssec:noncyclic}

Let $\xx=(x_i)$, $-\infty<i\leq 0$, be a sequence of nonnegative real numbers with only finitely many nonzero terms.
When $i<0$, we define $\mf{i}(\xx)$ by the formula \eqref{mfavg} with understanding that $r\leq |i|$. To set up the problem which is the subject of this section we do not need to define $\mf{0}(\xx)$.  

Given $p>0$, define
$$
 T(\xx,p)=\sum_{i\leq -1} \frac{x_i}{\mf{i}(\xx)}+\frac{x_0}{p}.
$$
Clearly, this function is homogeneous of order $0$, that is, for any $t>0$
$$
T(t\xx, tp)=T(\xx,p).
$$

For $N=1,2,\dots$, let $\Delta_N$ be the set of nonnegative sequences 
$\xx=(x_i)_{i=-\infty}^0$ such that
$x_i=0$ for $i\leq -N$ and $\sum_{i=1-N}^0 x_i=1$.
We will also treat members of $\Delta_N$ as finite
$N$-tuples $(x_{1-N},\dots,x_0)$. Where it is desireable to emphasize this point of view we, will write $T_N(\xx,p)$ instead of $T(\xx,p)$.

Define
\begin{equation}
\label{infTN}
\minT{N}(p)=\inf_{\xx\in\Delta_N} T(\xx,p).
\end{equation}

Since $\Delta_{N}\subset\Delta_{N+1}$, for every $p$ and $N\geq 1$ we have
$
\minT{N+1}(p)\leq \minT{N}(p)
$.
Therefore there exists a monotone limit
$$
\infT(p)=\downarrow\lim_{N\to\infty} \minT{N}(p).
$$

The relevance of the described problem \eqref{infTN} is explained by the following 

\begin{prop}
\label{prop:noncyclic-to-cyclic}
For any $n\geq 1$ the minimum value \eqref{infS} in the cyclic problem is equal to the corresponding minimum value for the non-cyclic problem \eqref{infTN} with $N=n$ and $p=1/n$
and, moreover, to the limit value $\infT(1/n)$:
\begin{equation}
\label{max-uncyc-cyc}
\infS{n}=\minT{n}(1/n)=\infT(1/n).
\end{equation} 
\end{prop}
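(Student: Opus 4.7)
The plan is to prove the two equalities in the chain separately, with Proposition~\ref{prop:fullmaxint} providing the bridge between the cyclic and noncyclic pictures.

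For the first equality $\infS{n}=\minT{n}(1/n)$, I would establish both inequalities via an explicit cyclic–noncyclic correspondence. For ``$\ge$'', given a cyclic $\xx$ normalized so that $\sum_{i=1}^n x_i=1$, use Proposition~\ref{prop:fullmaxint} to pick $i_*$ with $[i_*:i_*+n-1]$ a full maximal interval; then $\mf{i_*+n-1}(\xx)=\av{\xx}=1/n$. Define $\yy\in\Delta_n$ by $y_k=x_{i_*+n-1+k}$ for $k\in[1-n:0]$. The key identification is $\mf{i_*+n-1+k}(\xx)=\mf{k}(\yy)$ for $k\in[1-n:-1]$: the inequality ``$\ge$'' is immediate because cyclic suprema admit more values of $r$; the interesting direction is ``$\le$'' -- any cyclic window of length $r>|k|$ splits as $J\sqcup K$, with $J$ sitting inside the full maximal interval and $K$ a prefix of it reached by wraparound. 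Full-maximality of $[i_*:i_*+n-1]$ forces $a_K\le 1/n$, and applying the same maximality to the complement of $J$ within the full maximal interval yields $a_J\ge 1/n$, so $a_{J\sqcup K}\le a_J$, which is already counted by $\mf{k}(\yy)$. Summing gives $\mcs(\xx)=T_n(\yy,1/n)$. For ``$\le$'', given $\yy\in\Delta_n$, set cyclic $x_i=y_{i-n}$ for $i\in[1:n]$; then $\mf{n}(\xx)\ge\av{\xx}=1/n$ by Proposition~\ref{prop:fullmaxint}, while for $i<n$ the cyclic supremum $\mf{i}(\xx)$ dominates the noncyclic $\mf{i-n}(\yy)$, so $\mcs(\xx)\le T_n(\yy,1/n)$ term by term.

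For the second equality $\minT{n}(1/n)=\infT(1/n)$, monotonicity of $\minT{N}(1/n)$ in $N$ gives $\infT(1/n)\le\minT{n}(1/n)$ for free. For the reverse it suffices to show $T_N(\yy,1/n)\ge\infS{n}$ for every $\yy\in\Delta_N$ and every $N\ge n+1$, via a cyclic-concentration construction: put $x_i=y_{i-n}$ for $i\in[2:n]$ and $x_1=y_{1-n}+c$ with $c=\sum_{k\le -n}y_k$, giving cyclic $\xx$ of period $n$ with $\sum x_i=1$. The computations from the first equality then yield $\mf{n}(\xx)\ge 1/n$, $\mf{i}(\xx)\ge\mf{i-n}(\yy)$ for $i\in[2:n-1]$, and $\mf{1}(\xx)=\max(\mf{1-n}(\yy),1/n)$. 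So $n-1$ of the terms of $\mcs(\xx)$ are dominated by the corresponding terms of $T_N(\yy,1/n)$, and the claim reduces to comparing $x_1/\mf{1}(\xx)$ with the ``head'' $y_{1-n}/\mf{1-n}(\yy)+\sum_{k\le -n}y_k/\mf{k}(\yy)$.

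This head comparison is the main obstacle. In the pleasant case where $\mf{k}(\yy)\le 1/n$ for every $k\le -n$ -- which holds whenever the forward interval from $k$ realizing $\mf{k}(\yy)$ has length $\ge n$, since then the average is bounded by $1/|k|\le 1/n$ -- each $y_k/\mf{k}(\yy)\ge ny_k$ and the bound closes immediately using $\mf{1}(\xx)\ge 1/n$. The genuine difficulty is when some $\mf{k}(\yy)$ with $k\le-n$ is attained on a short forward interval $[k+1:k+r]$ of length $r<n$ lying in $[k+1:-1]$; then simple concentration into $x_1$ loses information. I would handle this either by refining the cyclic construction (reallocating the offending tail mass to the particular $x_i$ whose cyclic forward window reproduces that same short high-average interval, so the cyclic denominator tightens) or by a variational/KKT analysis on the noncyclic problem showing that any near-minimizer of $\minT{N}(1/n)$ must vanish on $[1-N:-n]$, thereby reducing the noncyclic problem to support length $n$. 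Carrying out either cleanup cleanly is the technical core of the proof.
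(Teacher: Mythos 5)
Your handling of the first equality $\infS{n}=\minT{n}(1/n)$ is correct and in fact somewhat cleaner than the paper's route. In the ``$\ge$'' direction you reproduce the paper's Lemma~\ref{lem:uncycling-gt}: rotate so that $\intv{1-n}{0}$ is a full maximal interval (Proposition~\ref{prop:fullmaxint}), observe that cyclic and noncyclic forward maxima agree on $\intv{1-n}{-1}$ (your $J\sqcup K$ splitting argument via Lemma~\ref{lem:adjint} is a correct justification of the identification the paper asserts), and conclude $\mcs(\xx)=T_n(\xx,1/n)$. Your ``$\le$'' direction is genuinely different from the paper: it wraps an arbitrary $\yy\in\Delta_n$ into an $n$-periodic sequence and compares termwise, using only $\mf{i}(\xx)\ge\mf{i-n}(\yy)$ (more admissible windows) and $\mf{n}(\xx)\ge\av{\xx}$. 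The paper instead obtains ``$\le$'' indirectly through Lemma~\ref{lem:uncycling-conditional}, which requires detailed knowledge of the minimizer's shape (Lemma~\ref{lem:maxseq-monotone}). Your argument avoids that detour entirely, at no cost.

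The genuine gap is the second equality $\minT{n}(1/n)=\infT(1/n)$, and you have located precisely where it is but not closed it. Your concentration construction gives the $n-1$ termwise comparisons for free, but the ``head comparison'' involving indices $k\le -n$ does not close in general (indeed a direct concentration loses too much when some short window $\intv{k+1}{k+r}\subset\intv{k+1}{-1}$ has a large average, as you note). The two remedies you name are both plausible, but neither is carried out, and you yourself flag this as ``the technical core.'' This is exactly what the paper's Lemmas~\ref{lem:maxseq-monotone} and~\ref{lem:maxseq-stabilization} supply: Lemma~\ref{lem:maxseq-monotone} shows by a discrete merging/variational argument (merge $x_{j-1}$ into $x_j$ wherever monotonicity fails, and use the majorization comparison of Lemma~\ref{lem:merging} to show the objective strictly decreases) that every $(N,p)$-minimizer is nonincreasing and $\ge p$ on its support; Lemma~\ref{lem:maxseq-stabilization} then deduces that the support has length $\le \lceil 1/p\rceil$, so $\minT{N}(1/n)$ stabilizes at $N=n$. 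Without an argument of this kind — some proof that minimizers of the noncyclic problem with $N>n$ cannot carry mass on $\intv{1-N}{-n}$ — the equality $\minT{n}(1/n)=\infT(1/n)$ does not follow, and the proposition is not proved.
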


We begin with three lemmas which set the direction of the proof. The remaining steps will follow under their thematic headings. 

\begin{lemma}
\label{lem:minexistence}
The lower bound in the right-hand side of \eqref{infTN} is attainable, that is, there exists an $N$-tuple $\xx^{(N)}\in\Delta_N$
such that
$$
 T(\xx^{(N)},p)=\min_{\xx\in\Delta_N} T(\xx,p).
$$
Any such vector $\xx^{(N)}$ will be called a 
$(N,p)$-minimizer.
\end{lemma}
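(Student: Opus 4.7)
The plan is to establish existence by the classical direct method: compactness of $\Delta_N$ together with lower semicontinuity of the functional $T_N(\cdot, p)$. Since $\Delta_N$ identifies with the standard simplex in $\RR^N$, it is compact. The infimum $\minT{N}(p)$ is finite because the vector $\xx = (0, \dots, 0, 1) \in \Delta_N$ already gives $T_N(\xx, p) = 1/p$, so any minimizing sequence has bounded $T_N$ values.

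Next I would observe that for each $i \in \intv{1-N}{-1}$ the function $\mf{i}(\xx)$ is the maximum of the $|i|$ continuous linear functionals $\frac{1}{r}(x_{i+1}+\cdots+x_{i+r})$, $1 \leq r \leq |i|$, and hence is continuous on $\Delta_N$. Moreover $\mf{i}(\xx) = 0$ if and only if $x_{i+1} = \cdots = x_0 = 0$, since these are nonnegative summands of every relevant forward average.

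Now I would take a minimizing sequence $\xx^{(k)} \in \Delta_N$ with $T_N(\xx^{(k)},p) \to \minT{N}(p)$ and, by compactness, pass to a convergent subsequence with limit $\xx^* \in \Delta_N$. The goal is to verify termwise that $T_N(\xx^*, p) \leq \liminf_k T_N(\xx^{(k)}, p)$. The term $x_0^{(k)}/p$ is continuous. For each $i \in \intv{1-N}{-1}$ three cases arise: if $\mf{i}(\xx^*) > 0$, then continuity of numerator and denominator yields convergence of the ratio to $x_i^*/\mf{i}(\xx^*)$; if $\mf{i}(\xx^*) = 0$ while $x_i^* > 0$, then $x_i^{(k)}/\mf{i}(\xx^{(k)})$ diverges to $+\infty$, contradicting the boundedness of the minimizing sequence, so this configuration cannot occur in the limit; finally, if $\mf{i}(\xx^*) = x_i^* = 0$, the limiting summand is read as $0$ (in agreement with the admissibility convention, since this index carries no mass), while the $\liminf$ of the corresponding ratio is trivially nonnegative. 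Summing these termwise inequalities gives $T_N(\xx^*, p) \leq \minT{N}(p)$, and the reverse inequality is automatic, so $\xx^*$ is an $(N,p)$-minimizer. The only mildly subtle point is the $0/0$ interpretation at the limit; as shown, a bounded minimizing sequence automatically forces any such coincidence to contribute zero, so I do not anticipate any substantive obstacle beyond this bookkeeping.
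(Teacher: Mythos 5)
Your argument is correct and is essentially the same compactness-plus-(lower semi)continuity argument the paper gives in one line by viewing $T_N$ as a continuous map $\Delta_N\to[0,+\infty]$. One small remark: your ``case (iii)'' ($\mf{i}(\xx^*)=x_i^*=0$) is actually vacuous at a cluster point of a bounded minimizing sequence, since $\mf{i}(\xx^*)=0$ forces $x^*_{i+1}=\cdots=x^*_0=0$, so the largest index $j\le i$ with $x^*_j>0$ falls into your case (ii) and is already excluded; thus no $0/0$ convention is ever invoked.
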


(This lemma parallels Proposition~\ref{prop:len-max-int_le-n}.)

\begin{proof}
The map $\xx\mapsto T_N(\xx,p)$, $\Delta_N\to[0,+\infty]$ is continuous and not identically equal to $\infty$. Since $\Delta_N$ is compact, the finite minimum value is attained. 
\end{proof}

Based on the results of Section~\ref{sec:maxint}, it is easy to establish the analog of the relation \eqref{max-uncyc-cyc} with ``$\geq$'' signs.

\begin{lemma}
\label{lem:uncycling-gt}
There holds the inequality 
$\infS{N}\geq \minT{N}(1/N)$.
\end{lemma}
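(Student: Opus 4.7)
The strategy is uncycling: given any admissible cyclic $N$-tuple $\xx$, we construct $\yy\in\Delta_N$ with $T_N(\yy,1/N)\le\mcs(\xx)$; the infimum over $\xx$ then gives the claim. Because $\mcs$ is homogeneous of degree~$0$, we may rescale $\xx$ so that $\sum_{i=1}^N x_i=1$, whence $\av{\xx}=1/N$. Since $\mcs$ is continuous on the positive orthant and the $N$-tuples satisfying the linear-independence condition~$(*)$ are dense there, it suffices to prove the bound under~$(*)$ and pass to the limit.

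\textbf{Uncycling construction.} By Proposition~\ref{prop:fullmaxint}, choose $i_*\in\intv{1}{N}$ with $M^r\xx(i_*)=\av{\xx}=1/N$, so that $\intv{i_*}{i_*+N-1}$ is a full maximal interval. Define $\yy\in\Delta_N$ by
\[
y_i=x_{i_*+N-1+i},\qquad i=1-N,\dots,0.
\]
Reindexing the cyclic sum to start at $j=i_*$,
\[
\mcs(\xx)=\sum_{j=i_*}^{i_*+N-2}\frac{x_j}{\mf{j}(\xx)}+\frac{x_{i_*+N-1}}{\mf{i_*+N-1}(\xx)}.
\]
Using periodicity, $\mf{i_*+N-1}(\xx)=M^r\xx(i_*)=1/N$, so the last term equals $y_0/p$ with $p=1/N$, matching the corresponding term of $T_N(\yy,1/N)$.

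\textbf{Matching the remaining denominators.} For $j\in\intv{i_*}{i_*+N-2}$ set $i=j-(i_*+N-1)\in\intv{1-N}{-1}$; then $x_j=y_i$, and we claim $\mf{j}(\xx)=\mf{i}(\yy)$. Since $y_{i+s}=x_{j+s}$ for $1\le s\le|i|$, the inequality $\mf{i}(\yy)\le\mf{j}(\xx)$ is automatic: the noncyclic supremum ranges over $r\le|i|=i_*+N-1-j$, a subrange of the cyclic one. For the reverse we must show the cyclic maximum is attained at some $r\le|i|$, i.e., by an interval inside the full maximal interval. Under~$(*)$, equality of averages of non-equivalent intervals is excluded, forcing the $M$-interval at $i_*$ to be the full interval $\intv{i_*}{i_*+N-1}$ itself, while the $M$-interval $I'$ at $j+1$ has length $<N$. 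By Lemma~\ref{lem:max-int-nonoverlap}, $I'$ and $\intv{i_*}{i_*+N-1}$ are disjoint or nested; they share the point $j+1$, hence nest, and since $I'$ is strictly shorter yet starts strictly later, $I'\subset\intv{i_*}{i_*+N-1}$. The cyclic sup is therefore realized by an interval of length $\le|i|$, yielding $\mf{j}(\xx)\le\mf{i}(\yy)$.

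\textbf{Conclusion.} Combining the matches, $\mcs(\xx)=T_N(\yy,1/N)\ge\minT{N}(1/N)$; the general case follows by $(*)$-approximation and continuity, and the infimum over $\xx$ gives the lemma. The main obstacle is the matching step: showing that at an interior point of the full maximal interval the cyclic forward maximum cannot genuinely require crossing the right boundary. This is where the combinatorial structure of $M$-intervals developed in Section~\ref{sec:maxint}---especially non-overlap and the uniqueness of the full maximal interval under~$(*)$---does the essential work.
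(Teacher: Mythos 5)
Your proof is correct and follows the same uncycling strategy as the paper: pass to the dense class satisfying $(*)$, rotate so that the full maximal interval is $\intv{i_*}{i_*+N-1}$, and observe that the cyclic sum $\mcs(\xx)$ then coincides term-by-term with $T_N(\yy,1/N)$. The only place you supply more detail than the paper is the matching of denominators, where you explicitly invoke Lemma~\ref{lem:max-int-nonoverlap} to conclude that the $M$-interval based at each interior point of the full maximal interval stays inside it, a step the paper's proof simply asserts.
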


\begin{proof}
Consider an $N$-periodic nonnegative sequence $\xx$ satisfying the condition
$(*)$ of Section~\ref{sec:maxint}, so as to ensure that appropriate results of Section~\ref{sec:maxint} are applicable. 

Without loss of generality (using a cyclic renumbering if needed) 
we may assume that $\intv{1-N}{0}$ is a full maximal interval.
Suppose also that the normalization condition $\sum_{i=1-N}^0 x_i=1$ holds.
Clearly, $\inf S_N(\xx)=\infS{N}$ where $\xx$ runs over the set of sequences satisfying the stated conditions. 




Now let us treat the $N$-tuple $(x_{1-N},\dots,x_0)$
as an element of $\Delta_N$ in the non-cyclic problem.
The full maximal interval in the cyclic problem being $\intv{1-N}{0}$ implies that for $1-N\leq i\leq-1$ the values $m^+_i(\xx)$ are identical in the cyclic and non-cyclic case. 


The assumption $\sum_{i=1-N}^0 x_i=1$ corresponds to the equality $m^+_{0}(\xx)=\av{\xx}=1/N$ in the cyclic problem. The term $x_0/m^+_{0}(\xx)$ in the definition of $S_N(\xx)$ corresponds to the term $x_0/(1/N)$ in the definition of $T_N(\xx,1/N)$.

We conclude that $S_N(\xx)=T_{N}(\xx,1/N)$.
Hence $S_N(\xx)\geq \minT{N}(1/N)$.
The claimed inequality follows by
taking $\inf_{\xx}$ in the left-hand side.
\end{proof}

The next Lemma shows that a certain structure of a $(N,p)$-minimizer implies the converse inequality and hence  Proposition~\ref{prop:noncyclic-to-cyclic}. 

\begin{lemma}
\label{lem:uncycling-conditional}
Suppose that $\xx\in\Delta_{N}$ is a $(N,1/N)$-minimizer  
such that 

\smallskip
{\rm(i)} for some $k\in\intv{2-N}{0}$
\begin{equation}
\label{monot-minimizer}
  x_{k}\geq x_{1+k}\geq\dots\geq x_{0}\geq \frac{1}{N};
\end{equation}

{\rm(ii)} if $k\geq 3-N$, then 
$$
x_{1-N}=\dots=x_{k-2}=0.
$$
{\em (Note: the conditions (i), (ii) do not impose any restriction on $x_{k-1}$.)}

\smallskip
Then $\infS{N}= \minT{N}(1/N)$. 
\end{lemma}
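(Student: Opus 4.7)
\textbf{(Proof plan)} The plan is to establish the converse inequality $\infS{N}\leq\minT{N}(1/N)$ by exhibiting an admissible cyclic sequence whose one-sided maximal cyclic sum equals the non-cyclic minimum $T_N(\xx,1/N)$, and then combine with Lemma~\ref{lem:uncycling-gt} to obtain equality.

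First, form the $N$-periodic extension $\tilde\xx$ of the given minimizer by $\tilde x_{j+\ell N}:=x_j$ for $j\in\intv{1-N}{0}$ and $\ell\in\ZZ$; then $\av{\tilde\xx}=1/N$ since $\sum_{j=1-N}^{0}x_j=1$. The crucial structural claim is that $\intv{1-N}{0}$ is a full maximal interval of $\tilde\xx$, equivalently $\mf{0}(\tilde\xx)=1/N$. I will verify this by showing $\ai{\intv{1}{r}}(\tilde\xx)=\frac{1}{r}\sum_{j=1-N}^{r-N}x_j\leq 1/N$ for every $r\in\intv{1}{N}$ through a case analysis using hypotheses (i) and (ii) together: for $r\leq k+N-2$ the sum vanishes by the zero prefix (ii); for $r=k+N-1$ the sum equals the single free coordinate $x_{k-1}$, which is bounded via the complementary tail estimate $\sum_{j=k}^{0}x_j\geq(1-k)/N$ coming from (i); and for $r\geq k+N$ the tail $\sum_{j=r-N+1}^{0}x_j$ lies entirely in the monotone block of (i), hence is $\geq(N-r)/N$.

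Once $\intv{1-N}{0}$ is a full maximal interval, Lemma~\ref{lem:adjint} applied to the decomposition $\intv{1-N}{0}=\intv{1-N}{i}\sqcup\intv{i+1}{0}$ yields $\ai{\intv{i+1}{0}}(\xx)\geq 1/N$ for each $i\in\intv{1-N}{-1}$. Then for any wraparound $r>|i|$ the cyclic average $\ai{\intv{i+1}{i+r}}(\tilde\xx)$ decomposes (again via Lemma~\ref{lem:adjint}) as a weighted mean of $\ai{\intv{i+1}{0}}(\xx)\geq 1/N$ and $\ai{\intv{1}{i+r}}(\tilde\xx)\leq 1/N$, hence is at most $\ai{\intv{i+1}{0}}(\xx)\leq\mf{i}(\xx)$. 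Therefore $\mf{i}(\tilde\xx)=\mf{i}(\xx)$ for $i\in\intv{1-N}{-1}$ and $\mf{0}(\tilde\xx)=1/N$. Summing index-by-index gives $\mcs(\tilde\xx)=T_N(\xx,1/N)=\minT{N}(1/N)$, so $\infS{N}\leq\mcs(\tilde\xx)=\minT{N}(1/N)$, which combined with Lemma~\ref{lem:uncycling-gt} gives the claimed equality.

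The main obstacle is the case analysis for $\mf{0}(\tilde\xx)=1/N$: the delicate regime is $r=k+N-1$, where the unconstrained free coordinate $x_{k-1}$ must be controlled, and it is precisely here that (i) (monotone tail with floor $1/N$) and (ii) (exact zero prefix) cooperate to tighten the bound. The remainder is bookkeeping, while the translation from the full-maximal-interval property to the term-by-term identity $\mf{i}(\tilde\xx)=\mf{i}(\xx)$ is a second, shorter application of Lemma~\ref{lem:adjint}.
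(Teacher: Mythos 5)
Your argument is correct and follows essentially the same route as the paper's: establish via (i) and (ii) that $\intv{1-N}{0}$ is a full maximal interval for the periodic extension of the minimizer, deduce that the cyclic and non-cyclic maximal forward averages coincide so that $S_N(\tilde{\xx})=T_N(\xx,1/N)\leq\minT{N}(1/N)$, and combine with Lemma~\ref{lem:uncycling-gt}. The paper obtains the prefix bound $\ai{\intv{1-N}{j}}\leq 1/N$ for $j\geq k-1$ by a one-line contradiction via Lemma~\ref{lem:adjint} rather than your three-case direct estimate, and delegates the identification $\mf{i}(\tilde{\xx})=\mf{i}(\xx)$ to a back-reference to the proof of Lemma~\ref{lem:uncycling-gt}, whereas you spell out the wraparound splitting with Lemma~\ref{lem:adjint} explicitly; these differences are cosmetic.
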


\begin{proof}
We claim that 
\begin{equation}
\label{ai-conditional}
\ai{[1-N:\,j]}\leq 1/N\quad\text{for $j=1-N,\dots,-1$}.
\end{equation}
For $j\leq k-2$ this is trivial, since the left-hand side
equals $0$. Suppose that $j\geq k-1$ and $\ai{\intv{1-N}{j}}>1/N$.
The assumption \eqref{monot-minimizer} implies
$\ai{\intv{j+1}{0}}\geq 1/N$. Hence we get (cf.~Lemma~\ref{lem:adjint}) $1/N=\ai{\intv{1-N}{0}}>1/N$, a contradiction.


The inequalities \eqref{ai-conditional}
mean that $\intv{1-N}{0}$ is a full maximal interval in the cyclic problem. Hence, as in the proof of Lemma~\ref{lem:uncycling-gt}, we have the equality of
the ``cyclic'' and ``non-cyclic'' sums: $S_N(\xx)=T_{N}(\xx,1/N)$. Given that
$T_N(\xx,1/N)=\minT{N}(1/N)$, we obtain
$\infS{N}\leq \minT{N}(1/N)$. 
%
\end{proof}

In order to prove Proposition~\ref{prop:noncyclic-to-cyclic} we will
study the structure of a $(N,p)$-minimizer and verify the conditions (i) and (ii) of Lemma~\ref{lem:uncycling-conditional}.

\subsection{The structure of a minimizer}

We begin with a simple a technical lemma
concerning the relation of the right maximal averages
\eqref{MaxFr} of two majorization-comparable vectors
(cf.\ Remark~\ref{rem:majorization}).
It will be used in the proof of Lemma~\ref{lem:maxseq-monotone}

\begin{lemma}
\label{lem:merging}
Let $\xi=(\xi_1,\dots,\xi_r)$ and $\eta=(\eta_1,\dots,\eta_r)$ be two nonnegative $r$-tuples.
If $\xi\succeq\eta$ in the sense of majorization, i.e. $\ai{\intv{1}{i}}(\xi)\geq \ai{\intv{1}{i}}(\eta)$ for $i=1,\dots,r-1$, and $\ai{\intv{1}{r}}(\xi)= \ai{\intv{1}{r}}(\eta)$, then $M^r\xi(1)\geq M^r\eta(1)$.
%
\end{lemma}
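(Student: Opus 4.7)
The statement should follow essentially from unpacking the definition of the right maximal function together with the partial-sum majorization hypothesis. My plan is as follows.

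First, I would interpret $M^r\xi(1)$ and $M^r\eta(1)$ for a finite $r$-tuple as
$$
 M^r\xi(1)=\max_{1\leq s\leq r}\ai{\intv{1}{s}}(\xi),\qquad M^r\eta(1)=\max_{1\leq s\leq r}\ai{\intv{1}{s}}(\eta),
$$
noting that restricting the supremum to $s\leq r$ is harmless (extending an $r$-tuple by zeros can only dilute longer averages, so the max is attained at some $s\leq r$). Let $s^*\in\{1,\dots,r\}$ be an index where the maximum in the definition of $M^r\eta(1)$ is attained, so that $M^r\eta(1)=\ai{\intv{1}{s^*}}(\eta)$.

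Next, I would split into two cases according to whether $s^*<r$ or $s^*=r$. In the first case, the partial-sum majorization hypothesis gives $\ai{\intv{1}{s^*}}(\xi)\geq \ai{\intv{1}{s^*}}(\eta)$, so
$$
 M^r\xi(1)\;\geq\;\ai{\intv{1}{s^*}}(\xi)\;\geq\;\ai{\intv{1}{s^*}}(\eta)\;=\;M^r\eta(1).
$$
In the second case, the equality of full-range averages $\ai{\intv{1}{r}}(\xi)=\ai{\intv{1}{r}}(\eta)$ gives the same conclusion
$$
 M^r\xi(1)\;\geq\;\ai{\intv{1}{r}}(\xi)\;=\;\ai{\intv{1}{r}}(\eta)\;=\;M^r\eta(1).
$$
Either way, the desired inequality holds.

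I do not foresee any real obstacle here: with the paper's (non-standard) definition of majorization via partial averages, the lemma is a one-line consequence of taking $\max_s$ on both sides of a pointwise-in-$s$ inequality (with the boundary index $s=r$ handled by the equality hypothesis rather than inequality). The only mild subtlety is the clarification that, for finite $r$-tuples, the supremum in $M^r$ need only be taken over $s\leq r$. The substantive use of this lemma is downstream, in the proof of Lemma~\ref{lem:maxseq-monotone}, where a merging or smoothing step produces a majorization-comparable tuple whose maximal average one needs to control.
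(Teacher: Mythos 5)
Your proposal is correct and follows the same approach as the paper, which simply notes that pointwise comparison of $M^r\xx(1)=\max_i \ai{\intv{1}{i}}(\xx)$ for $\xx=\xi$ and $\xx=\eta$ makes the claim obvious. Your case split at $s^*=r$ vs.\ $s^*<r$ is a harmless elaboration (since the equality at $i=r$ is in particular a ``$\geq$'', one can just take $\max_s$ of a single pointwise inequality over $s\in[1:r]$), and your remark about restricting the $\sup$ to $s\leq r$ is the right observation for finite tuples extended by zeros.
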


\begin{proof}
Pointwise comparison 
of the definitions
$M^r\xx(1)=\max_i \ai{\intv{1}{i}}(\xx)$ for $\xx=\xi$
and $\xx=\eta$ makes the claim obvious.
%
\end{proof}

\begin{lemma}
\label{lem:maxseq-monotone}
Let $N\geq 1$ and $\xx\in\Delta_N$ be a $(N,p)$-minimizer.
Suppose that $x_{k-1}\neq 0$ for some $k\in\intv{2-N}{0}$.
Then 
$$
 x_{k}\geq x_{1+k}\geq\dots\geq x_{0}\geq p.
$$
(In other words, the minimizing sequence on its support, augmented by $p$ on the right, is monotone except possibly at the leftmost term.)
\end{lemma}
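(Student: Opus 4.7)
The plan is to derive a contradiction by perturbing a hypothetical minimizer $\xx$ that violates either the monotonicity of the tail or the inequality $x_0\ge p$. First, reduce to the case that $k-1$ is the leftmost index with $x_{k-1}\ne 0$; for larger $k$, the claim concerns a shorter suffix and follows by applying the reduced case. Under this reduction $x_i=0$ for $i<k-1$, so
\[
T(\xx,p) \;=\; \frac{x_{k-1}}{\mf{k-1}(\xx)} \;+\; \sum_{i=k}^{-1} \frac{x_i}{\mf{i}(\xx)} \;+\; \frac{x_0}{p}.
\]

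\emph{Step 1 (tail monotonicity).} Suppose for contradiction $x_{j_0}<x_{j_0+1}$ for some $j_0\in\intv{k}{-1}$ and form $\xx'$ from $\xx$ by swapping $x_{j_0}$ and $x_{j_0+1}$. I compare $T(\xx,p)$ to $T(\xx',p)$ position by position. For $i\le j_0-1$, the swap increases the partial sum of $(x_{i+1},\ldots,x_0)$ at position $j_0$ by $x_{j_0+1}-x_{j_0}$ and leaves all other partial sums unchanged, so $(x'_{i+1},\ldots,x'_0)$ majorizes $(x_{i+1},\ldots,x_0)$; Lemma~\ref{lem:merging} gives $\mf{i}(\xx')\ge\mf{i}(\xx)$, whence $x_i/\mf{i}(\xx')\le x_i/\mf{i}(\xx)$; the assumption $x_{k-1}>0$ activates this mechanism at $i=k-1$. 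For $i=j_0+1$, $\mf{j_0+1}$ depends only on $x_{j_0+2},\ldots,x_0$ and is unchanged, while the numerator drops from $x_{j_0+1}$ to $x_{j_0}$, yielding a strict gain $(x_{j_0+1}-x_{j_0})/\mf{j_0+1}>0$. The only position where the term may increase is $i=j_0$; I bound this loss via the inequalities $\mf{j_0}(\xx)\ge x_{j_0+1}$, $\mf{j_0}(\xx')\ge x_{j_0}$, and $\mf{j_0}(\xx)-\mf{j_0}(\xx')\le(x_{j_0+1}-x_{j_0})/r^*$ (with $r^*$ the length of the interval realizing $\mf{j_0}(\xx)$). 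Assembling these contributions should yield $T(\xx,p)>T(\xx',p)$, contradicting minimality.

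\emph{Step 2 ($x_0\ge p$).} Once tail monotonicity is established, $\mf{i}(\xx)=x_{i+1}$ for $i\in\intv{k-1}{-1}$, so $T(\xx,p)=\sum_{i=k-1}^{-1} x_i/x_{i+1}+x_0/p$. Define $\tilde T(\yy)=\sum_{i=k-1}^{-1} y_i/y_{i+1}+y_0/p$ on $\Delta_N$; the universal inequality $\mf{i}(\yy)\ge y_{i+1}$ gives $T(\yy,p)\le\tilde T(\yy)$, with equality at $\yy=\xx$. Under the perturbation $\xx_\eps$ with $x_{k-1}\mapsto x_{k-1}-\eps$, $x_0\mapsto x_0+\eps$, minimality implies $\tilde T(\xx_\eps)-\tilde T(\xx)\ge T(\xx_\eps,p)-T(\xx,p)\ge 0$; dividing by $\eps$ and sending $\eps\to 0^+$ yields
\[
\frac{1}{p}\;-\;\frac{1}{x_k}\;-\;\frac{x_{-1}}{x_0^2}\;\ge\;0.
\]
Since $x_{-1}\ge x_0$ by Step~1, we have $x_{-1}/x_0^2\ge 1/x_0$, so $1/p\ge 1/x_k+1/x_0>1/x_0$, and hence $x_0>p$. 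The main obstacle is the quantitative bookkeeping in Step~1, in particular controlling the loss at $i=j_0$ against the gain at $i=k-1$ (which is scaled by the possibly small factor $x_{k-1}$ but is strictly positive whenever the majorization of the relevant subsequence is strict).
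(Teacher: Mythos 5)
Your Step~1 does not close, and the obstacle you flag at the end is a genuine gap, not a technicality. The paper does not use a transposition; it uses a \emph{merge}: it replaces $x_{j-1}$ and $x_j$ by the single element $x_{j-1}+x_j$, shifting everything to the left by one slot, to produce $\yy\in\Delta_{N-1}\subset\Delta_N$, and then compares $T_N(\xx,p)$ with $T_{N-1}(\yy,p)$ term by term. These are not equivalent operations, and the transposition you propose does not in general decrease $T$ at a point where monotonicity fails.

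A concrete illustration: take $N=3$ and $\xx=(x_{-2},x_{-1},x_0)$ with $x_{-1}<x_0$, and let $\xx'=(x_{-2},x_0,x_{-1})$ be the swapped vector. Using $\mf{-1}(\xx)=x_0$, $\mf{-2}(\xx)=(x_{-1}+x_0)/2$, $\mf{-1}(\xx')=x_{-1}$, $\mf{-2}(\xx')=x_0$, one finds
\begin{equation*}
T(\xx,p)-T(\xx',p)=(x_0-x_{-1})\left[\frac{x_{-2}}{(x_{-1}+x_0)x_0}-\frac{x_{-1}+x_0}{x_0\,x_{-1}}+\frac{1}{p}\right],
\end{equation*}
and the bracket is negative whenever $x_{-1}$ is small or $p$ is not small (say $p=0.5$, $x_{-2}=0.1$, $x_{-1}=0.4$, $x_0=0.5$ gives bracket $\approx -2.28$). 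So a swap at a hypothetical offending vector may \emph{increase} $T$, and no contradiction results. The structural reason is exactly the $i=j_0$ term you singled out: the swap puts the larger numerator $x_{j_0+1}$ over a denominator $\mf{j_0}(\xx')$ that can be as small as $x_{j_0}$, so the loss there is of order $x_{j_0+1}/x_{j_0}$, which has no a priori bound; your compensating gains at $i=j_0+1$ (which scale as $(x_{j_0+1}-x_{j_0})/\mf{j_0+1}$) and at $i\le j_0-1$ (which carry only factors $x_i$) cannot control that ratio without further information about the minimizer.

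The merge dodges this entirely. After merging, the element $x_{j-1}$ never sits over $\mf{j-1}(\xx)$ in $T_{N-1}(\yy,p)$; it is absorbed into $y_j=x_{j-1}+x_j$, which sits over $\mf{j}(\yy)=\mf{j}(\xx)$. The term-by-term comparison then gives $T_N(\xx,p)-T_{N-1}(\yy,p)=R_1+R_2$ with $R_1\ge0$ by Lemma~\ref{lem:merging} (the same majorization comparison you cite, but applied to the shifted tuples of the merge, not the swap), $R_2\ge0$ by the choice of $j$ as the rightmost violation, and strict inequality in at least one of them; there is no "loss" side of the ledger to dominate.

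Your Step~2 is sound given Step~1, and the variational inequality
$\tfrac{1}{p}\ge\tfrac{1}{x_k}+\tfrac{x_{-1}}{x_0^2}$ correctly yields $x_0>p$; but note that in the paper the case $x_0<p$ is already folded into a single merge argument (with $j=0$), so a separate perturbation is not needed there.
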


\begin{proof}
Suppose, by way of contradiction, that at least one of the claimed inequalities is violated. Let $x_j$ be the rightmost term for which $x_{j}<x_{j+1}$; if $x_{0}<p$
we put $j=0$.
%
Define $\yy\in\Delta_{N-1}$ as follows:
$$
   y_i=
\begin{cases}
x_i\quad\mbox{\rm for $j+1\leq i\leq 0$},\\
   x_{j-1}+x_{j} \quad\mbox{\rm for $i=j$},\\
   x_{i-1}\quad\mbox{\rm for $2-N\leq i\leq j-1$}
\end{cases}
.
$$
(The cases $i\geq j+1$ or $i\leq j-1$ may happen to be void.)


We will show that $T_{N-1}(\yy,p)<T_N(\xx,p)$.
The definition of $\yy$ implies that
$T_N(\xx,p)-T_{N-1}(\yy,p)=R_1+R_2$, where
$$
R_1=\sum_{i=1-N}^{j-2} x_{i}\left(\frac{1}{m^+_{i}(\xx)}
-\frac{1}{m^+_{i+1}(\yy)}\right)
$$
and
$$
 R_2=\begin{cases}\dst
   x_{j-1}\left(\frac{1}{m^+_{-1}(\xx)}
-\frac{1}{p}\right)
&\quad\mbox{\rm if $j=0$},\\[2ex]
\dst
   x_{j-1}\left(\frac{1}{m^+_{j-1}(\xx)}
-\frac{1}{m^+_{j}(\yy)}\right)
&\quad\mbox{\rm if $j<0$}.
\end{cases}
$$

Let us show first that $R_1\geq 0$ and $R_2\geq 0$.

For $R_1$ it follows by Lemma~\ref{lem:merging}
applied successively, for every $i\in\{2-N,\dots,j-1\}$,
to $\eta=(x_i,\dots,x_0)$ and $\xi=(y_{i+1},\dots,y_0,0)$.

For $R_2$ we have: if $j=0$, then $p>x_0=m^+_{-1}(\xx)$
by assumption. If $j<0$, then $m^+_{j}(\yy)=m^+_{j}(\xx)=x_{j+1}$
(since $x_{j+1}\geq\dots\geq x_0$). Now, by assumption,
$x_{j}<x_{j+1}$ and it follows that $m^+_{j-1}(\xx)<x_{j+1}$.

In the case $x_{j-1}>0$ we have the strict inequality $R_2>0$, which immediately yields $T_{N-1}(\yy,p)<T_N(\xx,p)$.

Suppose that $x_{i}=0$ for $\ell+1\leq i\leq j-1$,
while $x_\ell>0$. Such an $\ell$ exists because we are given that $x_{k-1}>0$. Consider the difference 
$$
\Delta=\frac{1}{m^+_{\ell}(\xx)}
-\frac{1}{m^+_{\ell+1}(\yy)}.
$$
Once we prove that $\Delta>0$, the strict inequality
$R_1>0$ will follow and we obtain $T_{N-1}(\yy,p)<T_N(\xx,p)$ in this case, too.

Let $\intv{\ell+1}{k}$ be the $M$-interval at $\ell+1$ for $\xx$, so
$m^+_\ell(\xx)=\ai{\intv{\ell+1}{k}}(\xx)$.
Then $k\geq j+1$, since $x_{\ell+1}=\dots=x_{j-1}\leq x_j<x_{j+1}$. Therefore
$$
m^+_{\ell+1}(\yy)\geq 
\ai{\intv{\ell+2}{k}}(\yy)=\frac{r}{r-1}\ai{\intv{\ell+1}{k}}(\xx)>m^+_\ell(\xx),
$$
where $r=k-\ell$.

Thus, in all cases $T_{N-1}(\yy,p)<T_N(\xx,p)$,
hence $\xx$ cannot be a $(N,p)$-minimizer.
The proof is complete.
\end{proof}

\begin{remark} 
\label{rem:p1}
For $p\geq 1$ the $(N,p)$ minimizer is the sequence of the form $[0,\dots,0,1]$. Indeed, if $x_{-k}\neq 0$ for some $k>0$, then by the above lemma $x_0\geq p\geq 1$. On the other hand, 
$x_0\leq 1-x_{-k}<1$, a contradiction.
\end{remark}

\subsection{End of cyclic-to-noncyclic reduction}

\begin{lemma}  
\label{lem:maxseq-stabilization}
The nonincreasing sequence $(\minT{N}(p))$ stabilizes no later than at $N=\lceil 1/p\rceil$, that is,
$\infT(p)=\minT{N}(p)$ for any integer $N\geq \lceil 1/p\rceil$.
\end{lemma}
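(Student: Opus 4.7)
The plan is to use Lemma~\ref{lem:maxseq-monotone} to bound the support size of any $(N,p)$-minimizer by $N_0:=\lceil 1/p\rceil$, after which the inclusion $\Delta_{N_0}\subseteq\Delta_N$ together with monotonicity of the sequence $(\minT{N}(p))$ forces $\minT{N}(p)=\minT{N_0}(p)$ for all $N\geq N_0$. Passing to the limit then gives $\infT(p)=\minT{N_0}(p)$, which is the claimed stabilization.

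For the support bound, pick a $(N,p)$-minimizer $\xx\in\Delta_N$ (existing by Lemma~\ref{lem:minexistence}), let $\ell\in\intv{1-N}{0}$ be the leftmost index with $x_\ell>0$, and set $s:=1-\ell$. If $s=1$ there is nothing to check. If $s\geq 2$, apply Lemma~\ref{lem:maxseq-monotone} with $k=2-s$ --- whose hypothesis $x_{k-1}=x_\ell\neq 0$ holds by the choice of $\ell$ --- to obtain
\begin{equation*}
 x_{2-s}\geq x_{3-s}\geq\dots\geq x_0\geq p.
\end{equation*}
In particular each of these $s-1$ entries is strictly positive, so the support really is the interval $\intv{1-s}{0}$. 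Adding $x_{1-s}>0$ and using the normalization $\sum_{i=1-s}^0 x_i=1$ yields
\begin{equation*}
(s-1)\,p\;\leq\;\sum_{i=2-s}^{0} x_i\;=\;1-x_{1-s}\;<\;1.
\end{equation*}
Since $s-1$ is a nonnegative integer with $s-1<1/p$, a short case split on whether or not $1/p$ is an integer gives $s-1\leq\lceil 1/p\rceil-1$, i.e.\ $s\leq N_0$.

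Thus every $(N,p)$-minimizer has support contained in $\intv{1-N_0}{0}$, hence belongs to $\Delta_{N_0}\subseteq\Delta_N$. Consequently $\minT{N}(p)=T(\xx,p)\geq\minT{N_0}(p)$; combined with the reverse inequality from the same inclusion $\Delta_{N_0}\subseteq\Delta_N$, this gives $\minT{N}(p)=\minT{N_0}(p)=\infT(p)$ for every $N\geq N_0$. No real obstacle remains: all the analytic content sits in Lemma~\ref{lem:maxseq-monotone}, and the only step requiring any care is the ceiling arithmetic, which is precisely what pins the stabilization threshold at $\lceil 1/p\rceil$ (rather than at $\lfloor 1/p\rfloor+1$).
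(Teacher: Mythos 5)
Your proof is correct and follows essentially the same route as the paper's: use Lemma~\ref{lem:maxseq-monotone} to make the tail of a minimizer monotone and at least $p$, and read off a bound on the support size from the normalization. You are actually a bit more careful than the paper's own write-up at the boundary case --- by retaining $x_{1-s}>0$ you get the strict inequality $(s-1)p<1$, which is exactly what is needed to land on $s\leq\lceil 1/p\rceil$ even when $1/p$ is an integer (the paper's displayed chain $1\geq\sum_{i=1-k}^0 x_i\geq kp$ would, taken literally, only give $k\leq 1/p$).
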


\begin{proof}
If $\xx^*$ is a $(N,p)$-maximizer and $x^*_{-k}\neq 0$, then by Lemma~\ref{lem:maxseq-monotone} we have
 $1\geq \sum_{i=1-k}^0 x_i\geq kp$. 
Hence $k\leq 1/p$, so $x^*_i=0$ for $i< -1/p$. It follows that the solution of the extremal problem 
\eqref{infTN} is independent of $N$ for 
$N\geq \lceil 1/p\rceil $.
\end{proof}

\begin{remark}
For $p\geq 1$ the result is sharp: the stabilization occurs at $N=1$ (i.e.\ immediately), in agreement with Remark~\ref{rem:p1}.

If $p$ is small, then in fact the stabilization occurs much earlier than what Lemma~\ref{lem:maxseq-stabilization} promices, namely at $N=O(|\log p|)$. It follows from the analysis of recurrence relations associated with reduced optimization problem (see next section) given in 
\cite{Sadov_2022logper}.
\end{remark}
 
\begin{proof}[Proof of Proposition~\ref{prop:noncyclic-to-cyclic}]
The condition \eqref{monot-minimizer} of Lemma~\ref{lem:uncycling-conditional} is fulfilled by Lemmas~\ref{lem:maxseq-stabilization} and~\ref{lem:maxseq-monotone}.
\end{proof}

\section{The reduced optimization problem}
\label{sec:reduction}

Recall the functions already mentioned in Section~\ref{sec:summfa}
(in comparison with Eqs.~\eqref{tTN} and \eqref{mintTN}
here we shift the indices by $-N$)
$$
 \tT_N(\xx,p)=\sum_{i=1-N}^{-1} \frac{x_i}{x_{i+1}}+\frac{x_0}{p}, \qquad \xx\in\Delta_N,
$$
and
$$
\mintT{N}(p)=\inf_{\xx\in\Delta_N} \tT_N(\xx,p).
$$

We consider the minimization problem 
\eqref{infTN} in the non-cyclic case with an arbitrary $p<1$. (The case $p\geq 1$, although not relevant to the proof of Proposition~\ref{prop:reduction}, is covered by Remark~\ref{rem:p1}; it is shown to be trivial.)

\begin{lemma} 
\label{lem:noncyc-simplification}
For any $N\geq 1$ and any $p>0$ there holds the identity
$$
 \mintT{N}(p)=\minT{N}(p).
$$
\end{lemma}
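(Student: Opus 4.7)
The plan is to establish two matching inequalities. The easy direction $\minT{N}(p)\leq\mintT{N}(p)$ follows from the pointwise bound $T_N(\xx,p)\leq \tT_N(\xx,p)$ valid for every $\xx\in\Delta_N$: specializing the supremum in \eqref{mfavg} to $r=1$ yields $\mf{i}(\xx)\geq x_{i+1}$, so each summand $x_i/\mf{i}(\xx)$ is dominated by $x_i/x_{i+1}$, while the endpoint terms $x_0/p$ coincide. Taking the infimum over $\Delta_N$ on both sides gives the desired inequality.

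For the reverse inequality $\mintT{N}(p)\leq\minT{N}(p)$, the strategy is to exhibit a minimizer of $T_N$ on which the two functionals happen to agree. By Lemma~\ref{lem:minexistence} such a minimizer $\xx^{*}\in\Delta_N$ exists, i.e.\ $T_N(\xx^{*},p)=\minT{N}(p)$. Let $\ell$ be the leftmost index with $x^{*}_\ell>0$; if $\ell=0$ then $\xx^{*}=(0,\dots,0,1)$ and both $T_N(\xx^{*},p)$ and $\tT_N(\xx^{*},p)$ equal $1/p$, so we may assume $\ell\leq -1$. Lemma~\ref{lem:maxseq-monotone} applied with $k=\ell+1$ then yields the nonincreasing tail
$$
   x^{*}_{\ell+1}\geq x^{*}_{\ell+2}\geq \dots\geq x^{*}_{0}\geq p>0.
$$

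The key observation is that this monotonicity forces every maximal forward average in the support of $\xx^{*}$ to be attained at $r=1$: for each $i$ with $\ell\leq i\leq -1$ the values $x^{*}_{i+1},\dots,x^{*}_{0}$ form a nonincreasing subchain, so the prefix averages $\ai{[i+1:i+r]}(\xx^{*})$ are nonincreasing in $r$, giving $\mf{i}(\xx^{*})=x^{*}_{i+1}$. Moreover, the contrapositive of Lemma~\ref{lem:maxseq-monotone} shows that whenever $x^{*}_i>0$ for some $i\in\intv{1-N}{-1}$ one has $x^{*}_{i+1}\geq p>0$, so no division by zero occurs in $\tT_N(\xx^{*},p)$; indices with $x^{*}_i=0$ contribute zero to both functionals. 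Hence $T_N(\xx^{*},p)=\tT_N(\xx^{*},p)\geq\mintT{N}(p)$, and combining with $T_N(\xx^{*},p)=\minT{N}(p)$ yields the reverse inequality.

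The main (modest) obstacle is the bookkeeping at the leftmost nonzero index $\ell$: Lemma~\ref{lem:maxseq-monotone} places no constraint on $x^{*}_\ell$ itself relative to $x^{*}_{\ell+1}$. This asymmetry is harmless because the denominator $\mf{\ell}(\xx^{*})$ in the term $x^{*}_\ell/\mf{\ell}(\xx^{*})$ depends only on the values $x^{*}_{\ell+1},\dots,x^{*}_{0}$, which do form a nonincreasing chain by the lemma, so the $r=1$ reduction still applies. Once this is disentangled, the identity $T_N(\xx^{*},p)=\tT_N(\xx^{*},p)$ falls out of the monotone structure of the minimizer and completes the proof.
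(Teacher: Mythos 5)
Your proposal is correct and follows essentially the same route as the paper: the easy direction uses the pointwise bound $m^{+}_i(\xx)\geq x_{i+1}$, and the reverse direction evaluates both functionals at an $(N,p)$-minimizer $\xx^{*}$ and invokes Lemma~\ref{lem:maxseq-monotone} to conclude $m^{+}_i(\xx^{*})=x^{*}_{i+1}$ on the support. The only difference is that you spell out the edge cases (the leftmost nonzero index, the $0/0$ convention on degenerate terms, and the trivial minimizer $(0,\dots,0,1)$) which the paper leaves implicit, but the key lemmas and the overall decomposition into two inequalities are identical.
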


\begin{proof} 
Since $m^+_i(\xx)\geq x_{i+1}$, we have $T_N(\xx,p)\leq \tT_N(\xx,p)$ for any $\xx\in\Delta_N$.

On the other hand, let $\xx^*$ be a $(N,p)$-minimizer. Then by Lemma~\ref{lem:maxseq-monotone} $m^+_i(\xx^*)= x^*_{i+1}$, hence 
$$
\minT{N}(p)= T_N(\xx^*,p)= \tT_N(\xx^*,p)\geq\mintT{N}(p).
\hfill\eqno{\qedhere}
$$
\end{proof}

In view of Proposition~\ref{prop:noncyclic-to-cyclic}
and Lemma~\ref{lem:noncyc-simplification}, the proof of Proposition~\ref{prop:reduction} is complete.

\end{document}